\title{GMRES convergence bounds for eigenvalue problems}
\author{Melina A. Freitag\thanks{Department of Mathematical Sciences,
        University of Bath, Claverton Down, BA2 7AY, United Kingdom, {\tt
m.freitag@maths.bath.ac.uk}}   
\and Patrick K{\"u}rschner\thanks{Max Planck Institute for Dynamics of Complex Technical
Systems,~Sandtorstra{\ss}e~1,~39106~Magdeburg,~Germany,~
\texttt{kuerschner@mpi-magdeburg.mpg.de}.}
\and Jennifer Pestana%
        \thanks{%
                Department of Mathematics and Statistics,
                University of Strathclyde,
                Glasgow, G1 1XQ, United Kingdom
                \texttt{jennifer.pestana@strath.ac.uk}}.
}
\def\R{\mathbb{R}}
\def\C{\mathbb{C}}
\newcommand{\xx}{x_i}
\newcommand{\yy}{y_i}
\newcommand{\ww}{w}
\newcommand{\WW}{W}
\newcommand{\eiga}{\gamma}
\newcommand{\eigb}{\lambda}
\newcommand{\Eigb}{\Lambda}
\newcommand{\minq}{ \min_{\substack{q\in\Pi_k,\\q(0)=1}}}
\newcommand{\cdde}{\texttt{cdde1}}
\newcommand{\cdfd}{\texttt{cd\_fd}}
\newcommand{\olm}{\texttt{olm2000}}
\newcommand{\wt}{w^{(2)}}
\newcommand{\gref}[1]{{(\ref{#1})}}
\newcommand{\fmat}{G}
\newcommand{\fvec}{g}
\newcommand{\diag}[1]{\ensuremath{\mathop{\mathrm{diag}}\left( #1 \right)}}
\newcommand{\tx}{\ensuremath{\tilde{x}}}
\newcommand{\intab}[2]{\ensuremath{\int\limits_{#1}^{#2}}}
 \newcommand{\half}{\ensuremath{\frac{1}{2}}}
 \newcommand{\vecop}[1]{\ensuremath{\mathop{\mathrm{vec}}\left( #1 \right)}}
\newcommand{\beq}{\begin{equation}}
\newcommand{\eeq}{\end{equation}}
\theoremstyle{definition}
\newtheorem{theorem}{Theorem}
\newtheorem{lemma}[theorem]{Lemma}
\newtheorem{corollary}[theorem]{Corollary}
\newtheorem{remark}[theorem]{Remark}
\begin{document}
\maketitle
\begin{abstract}
The convergence of GMRES for solving linear systems can be influenced heavily by the structure of the right hand side. Within the solution of
eigenvalue problems via inverse iteration or subspace iteration, the right hand side is generally related to an approximate invariant subspace of the linear
system. We give detailed and new bounds on (block) GMRES that take the special behavior of the right hand side into account and explain the initial sharp
decrease of the GMRES residual. The bounds motivate the use of specific preconditioners for these  eigenvalue problems, e.g. tuned and polynomial
preconditioners, as we describe.  The numerical results show that the new (block) GMRES bounds are much sharper than conventional bounds and that preconditioned
subspace iteration with either a tuned or polynomial preconditioner should be used in practice.
\end{abstract}
\section{Introduction}

We consider the convergence of GMRES~\cite{SaadSchultz86} for solving sequences of linear systems of the form 
\beq
\label{eq:linsys1}
Bx_i = y_i,
\eeq
where $B = A-\sigma I$, $A\in\mathbb{C}^{n\times n}$ and  $\sigma$ is a fixed or variable scalar shift. 
Throughout, we assume that  $A$ is diagonalizable 
with eigenvalues $\eiga_j$, $j = 1,\dotsc, n$, so that  $B = A - \sigma I$ has eigenvalues $\eigb_j = \eiga_j - \sigma$, $j = 1,\dotsc, n$. 
Thus, we can write $B = Z\Eigb Z^{-1}$, where 
$\Eigb = \diag{\eigb_1, \dotsc, \eigb_n}\in\C^{n\times n}$ 
and $Z = [z_1, \ z_2, \ \dotsc, \ z_n]$ is a matrix of eigenvectors. 
Without loss of generality, we let $y_i$ be an approximation of the eigenvector $z_1$ (with both vectors normalized to have unit norm). 
Our main aim is to describe accurately, using simple bounds, the convergence of GMRES when the right-hand side is an approximate eigenvector; to clearly explain why this convergence behavior is lost for many preconditioners; and how it can be recovered by choosing or modifying the preconditioner appropriately. 

Systems of the form~\eqref{eq:linsys1} arise, for example, when computing an eigenvector and corresponding eigenvalue of a matrix $A\in\mathbb{C}^{n\times n}$ using inverse
iteration (see Algorithm~\ref{algo:inverse_iter}). In this case the shift is chosen to be close to an eigenvalue of $A$ and the approximate eigenvector $y_i$ is
the $i$th iterate of inverse iteration. 
If the system \gref{eq:linsys1} is solved exactly (e.g.\ using a
direct solver) one can show that, under certain generic assumptions, the right-hand side $y_i$ converges at least linearly to an eigenvector of $A$ (see, e.g.
\cite{PetW93,Ipsen97} and references therein). 
However, for very large matrix sizes $n$, direct solvers for \gref{eq:linsys1} are infeasible and an iterative method is used to solve  \gref{eq:linsys1}  inexactly. 
In this case, one can show that if the solve tolerance is chosen appropriately (e.g.\ it is reduced proportionally to the eigenvalue residual) we still obtain the same convergence rates for this inexact version of inverse iteration \cite{FrSp05b,FrSp06a}. 

\begin{algorithm}[H]
\DontPrintSemicolon
  \KwIn{$x_0$, $i_{\text{max}}$}
\For{$i = 0,\dotsc,i_{\text{max}}-1$}{
Choose $\sigma$ and $\tau_i$\;
Find $y_i$ such that $\|(A-\sigma I) y_i - x_i\| \le \tau_i$\;
Set $x_{i+1} = y_i/\|y_i\|$ and $\lambda_{i+1} = x_{i+1}^TAx_{i+1}$\;
Evaluate $r_{i+1} = (A - \lambda_{i+1}I)x_{i+1}$ and test for convergence\;
}
\KwOut{$x_{i_{\text{max}}}$, $\lambda_{i_{\text{max}}}$\;}
\label{algo:inverse_iter}
  \caption{Inexact inverse iteration}
\end{algorithm}

Here we are concerned with the the convergence behavior of iterative methods for solving \gref{eq:linsys1}. 
It is well known that when the right-hand side is an approximate 
eigenvector of $B$, Krylov subspace methods converge faster 
than for generic right-hand sides. This was first observed for 
Hermitian positive matrices by Simoncini and Eld\'en~\cite{SimonElden02}, who considered linear solves in Rayleigh quotient iteration.  
Further results in this direction include convergence
bounds in~\cite{BeGrSp04} for MINRES (see~\cite{PaiS75}) applied within inverse iteration for symmetric 
 matrices, and  
Theorem 2.2 in~\cite{FrSp07b}, which characterizes GMRES convergence 
for non-Hermitian generalized eigenvalue problems using the 
Schur factorization. 
For more general systems that do not necessarily come from 
eigensolvers, research into GMRES convergence has
also  highlighted the influence of  the 
right-hand side~\cite{APS98,DTM14}, and  
its representation in the eigenvector basis~\cite{MDT15, TPPW14}.

In this work, we show that if $A$ is diagonalizable  the  convergence theory developed in \cite{TPPW14} yields an insightful explanation for the GMRES behavior
in inverse iteration
 with different
types of preconditioned inner solves. Moreover, we explain why the GMRES residual often decreases sharply in the first iteration \cite{ElmXue11}. A more detailed description of this phenomenon is given in Section~\ref{sec:initdecrease}.
Regarding the preconditioned situation, it is well known that so
called tuned preconditioners can significantly reduce the number of required GMRES steps. We show that using polynomial preconditioners for the inner solves can
be similarly beneficial.

The rest of this paper is structured as follows. In Section~\ref{sec:gmres_bound} we present the GMRES bounds from~\cite{TPPW14} and show why the GMRES residual
norm often has a rapid initial decrease. In Section~\ref{sec:pre_gmres} we apply these bounds to  GMRES for inverse iteration  without  preconditioning, and 
with standard, tuned and  polynomial preconditioners. Extensions to generalized eigenvalue problems and block GMRES are discussed in                            
 Section~\ref{sec:extensions}, while numerical experiments can be found in Section~\ref{sec:num_ex}.

Throughout, 
 $k$ denotes the iteration count for the GMRES algorithm and $i$ is the iteration count for the outer iteration (which is inverse iteration here). In our analysis $j$ will denote the index of eigenvalues and eigenvectors, e.g. $Bz_j = \lambda_j z_j$ and is not to be mixed up with the iteration indices. 

\section{Analysis of the GMRES convergence bound}\label{sec:gmres_bound}

 In this section we apply results from~\cite{TPPW14} to the system 
in~\eqref{eq:linsys1} to give a lower bound on the number of GMRES 
iterations required to achieve a given residual reduction 
and explain why GMRES residuals rapidly 
decrease in the first iteration when $y_i \approx z_1$, i.e., when 
$y_i$ is a good approximation of an eigenvector of $A$. 
The GMRES convergence bounds we use to achieve this are  
related to those in~\cite{BeGrSp04} for Hermitian problems and to more 
general results in \cite{FrSp07b}
for non-Hermitian generalized eigenvalue problems. 

We begin by stating the relevant results from~\cite{TPPW14}. 
The key idea of these bounds  
is to express $\yy$ in the eigenvector basis. 
Accordingly we introduce 
\[\ww^{[i]} = Z^{-1}\yy/\|\yy\|_2 = \begin{bmatrix}\ww_1^{[i]} & \ww_2^{[i]} & \dotsc & \ww_n^{[i]}\end{bmatrix}^T = \begin{bmatrix} w_1^{[i]} & {\wt}^{[i]}\end{bmatrix}^T,\] 
where ${\wt}^{[i]} \in \C^{n-1}$. 
If $\yy\approx z_1$ it is reasonable to expect that $|\ww_1^{[i]}| \gg\|{\wt}^{[i]}\|_2$ 
and this is observed in practice (see results in corresponding columns of Tables~\ref{t:cdde:U}--\ref{t:all}). 
Note that in the rest of this section, we drop the outer iteration index $i$ from $w$ and 
related quantities for clarity. 
For simplicity, let us assume that the initial guess for GMRES is the zero vector, 
so that $r_0 = \yy$. 
Since $\yy$ is normalized at every step to have unit norm,
 $\|r_0\| = \|\yy\|=1$ $\forall i$.
 
We are now in a position to recall Theorem~2.2 from~\cite{TPPW14}, which states that 
\begin{equation}\label{eq:2.5}
\|r_k\|_2 \le \|Z\|_2 \minq\| \WW q(\Eigb)e\|_2 
= \|Z\|_2 \minq \left(\sum_{j = 1}^n |\ww_j q(\eigb_j)|^2 \right)^{\frac{1}{2}},
\end{equation}
where $\WW = \diag{\ww_1,\dotsc,\ww_n}$. 
This bound highlights  
the role of the eigenvalues of $B$ weighted by the corresponding component of $\ww$
in the GMRES convergence. 
In particular, whenever $\|\wt\|_2$ is small enough, 
$|\ww_j q(\eigb_j)| \ll |\ww_1 q(\eigb_1)|$, $j = 2, \dotsc, n$ and the 
minimizing polynomial will focus first on $|w_1 q(\eigb_1)|$. 
Thus, fast convergence of GMRES is linked both with the spectrum of $B$ and with 
the quality of the eigenvector approximation $\yy$. 

A slightly different bound is obtained if, similarly to~\cite{BeGrSp04,FrSp07b},  
we replace the minimizing polynomial
in~\eqref{eq:2.5} by $\tilde q_k(\eigb) = (1-\eigb/\eigb_1)q_{k-1}(\eigb)$,
where $q_{k-1}$ is a polynomial of degree $k-1$ for which $q_{k-1}(0) = 1$.
This is subject of Theorem 2.3 in~\cite{TPPW14}, which states that for $k>1$, 
\begin{subequations}
\label{eq:2.6}
\begin{align}
\|r_k\|_2
& \le  \|Z\|_2  \min_{\substack{q\in\Pi_{k-1},\\q(0)=1}} 
\left(\sum_{j = 2}^n |\tilde \ww_j q(\eigb_j)|^2 \right)^{\frac{1}{2}} \label{eq:2.6a}\\
& \le \|Z\|_2 \|\tilde \ww\|_2 \min_{\substack{q\in\Pi_{k-1},\\q(0)=1}} 
\max_{\eigb\in\{\eigb_2,\dotsc,\eigb_n\}}|q(\eigb)|, \label{eq:2.6b}
\end{align}
\end{subequations}
where $\tilde \ww_j= \ww_j(1-\eigb_j/\eigb_1)$, $j=2,\ldots,n$. Starting from \eqref{eq:2.5}, 
a similar approach to the proof of~\eqref{eq:2.6} shows that, additionally, 
\begin{equation} \label{eq:2.6k1}
\|r_1\|_2 \le \|Z\|_2 \left(\sum_{j = 2}^n |\tilde \ww_j |^2 \right)^{\frac{1}{2}} 
= \|Z\|_2 \|\tilde \ww\|_2.
\end{equation}
In contrast to~\eqref{eq:2.5}, neither~\eqref{eq:2.6} nor \eqref{eq:2.6k1} 
 involves $w_1$ and so they emphasize the fact
 that the relative residuals may be small when 
$\|\tilde w\|_2$ is small. 
(Note that we may always normalize $\|Z\|_2$, but that this affects $\ww$, 
and hence $\|\tilde \ww\|_2$).

\subsection{Inner iterations vs outer iterations}
In~\cite{FrSp07b} it was shown that the term  
\begin{equation}\label{eq:min_max}
\min_{\substack{q\in\Pi_{k-1},\\q(0)=1}} 
\max_{\eigb\in\{\eigb_2,\dotsc,\eigb_n\}}|q(\eigb)|
\end{equation}
can often be bounded 
by an expression of the form 
\begin{equation}\label{eq:power}
S(1/C)^{k-1}
\end{equation} (by, e.g.\ \cite[Proposition 2.3]{FrSp07b}) where $C$ and $S$ depend on the spectrum of $B$. The authors of~\cite{FrSp07b} used this bound to show that  
the number of GMRES iterations required to reach a desired tolerance,
 i.e.,  to satisfy $\|r_k\|_2/\|r_0\|_2 \le \tau$, can also be bounded.
 Combining~\eqref{eq:2.6} and~\eqref{eq:power} gives us a simple alternative lower 
bound on the number of GMRES iterations: 
\begin{equation}
\label{eq:iter_bound}
 k \ge 1 + \frac{1}{\log(C)}\left[\log\left(S\right) + 
\log\left(\frac{\|Z\|_2\|\tilde \ww\|_2}{\tau}\right) \right] .
\end{equation}

Since both $C$ and $S$ depend only on the spectrum of $A$ and the shift $\sigma$, 
they are independent of the inverse 
iteration step $i$. Thus, if we can bound  ${\|Z\|_2\|\tilde \ww\|_2}/{\tau}$ independently of the 
inverse iteration step then the above bound on $k$ suggests that 
 the number of GMRES iterations should not increase as 
 inverse iteration proceeds. We will observe that this occurs if GMRES is applied either without a preconditioner, or with specially chosen preconditioners (see Tables~\ref{t:cdde:U} and \ref{t:cdde:T}).

\subsection{Initial decrease in GMRES residuals}
\label{sec:initdecrease}
Another phenomenon that often arises when solving linear 
systems with approximate eigenvectors is that $\|r_1\|_2 \ll \|r_0\|_2$
(see e.g. Figure~\ref{f:cdde1_residuals}). 
The bound~\eqref{eq:2.6} shows clearly why this occurs. 

First note that
\[|\tilde \ww_j| = \left| 1-\frac{\eigb_j}{\eigb_1}\right| |\ww_j| = \frac{|\eiga_1-\eiga_j|}{|\eiga_1-\sigma|}|\ww_j| 
\le |\ww_j| \max_{j\in[2,\dotsc,n]} \frac{|\eiga_1-\eiga_j|}{|\eiga_1-\sigma|} = C_1|\ww_j|,\]
where the constant $C_1$ depends only on the spectrum of $A$ and the shift $\sigma$.
Accordingly, $\|\tilde \ww\|_2 \le C_1 \|\wt\|$, and~\eqref{eq:2.6k1} becomes 
\begin{equation}
\label{eq:r1}
\|r_1\|_2 \le C_2 \|\wt\|_2,
\end{equation}
where $C_2 =  \|Z\|_2 \max_{j\in[2,\dotsc,n]} {|\lambda_1-\lambda_j|}/{|\lambda_1-\sigma|}$. 

However, the values of $w_j$, $j=2,\ldots n$ are very small, and indeed are zero if the right-hand side is an exact eigenvector. Hence, in the latter
stages of inverse iteration there is a sharp decrease in the bound for the relative GMRES residual norms. Of course, this is only true if the right-hand side is
an approximate eigenvector of $B$, i.e.\ in the unpreconditioned case, or with specially chosen preconditioners. An example of this phenomenon is given in
Figure~\ref{f:cdde1_residuals}.
\section{Convergence bounds for preconditioned GMRES}\label{sec:pre_gmres}

Having considered a general analysis for GMRES convergence that involves the right-hand side vector, we now investigate what this analysis tells us about  solving the linear systems in inverse iteration, both with and without preconditioning.

\subsection{No preconditioner}

When GMRES is applied to \gref{eq:linsys1} without a preconditioner, the coefficient matrix is $B = A - \sigma I$, $A\in\C^{n\times n}$ 
where, assuming $A$ is diagonalizable, $B = Z\Lambda Z^{-1}$ with $\Lambda\in\C^{n\times n}$. 
The right-hand side $y_i$ is an approximation of $z_1$ (and both vectors are normalized to have unit norm). Thus it is possible to write 
\beq
\label{eq:decomp}
y_i = \alpha_i z_1 + Z_2 p_i,
\eeq
where $\alpha_i\in\mathbb{C}$, $Z_2 = [z_2,\ldots,z_n]\in \mathbb{C}^{n\times n-1}$ and $p_i\in\mathbb{C}^{n-1}$. We assume that $\alpha_i$ and $p_i$ are chosen such that $\|y_i\|=1$ $\forall i$. The scalar $\alpha_i$ measures the deviation of $y_i$ from $z_1$ and, under generic assumptions for exact solves, inverse iteration converges, that is $\alpha_i\rightarrow 1$ and $\|p_i\|\rightarrow 0$ as $i\rightarrow\infty$. 

The bound in \gref{eq:2.5}, and in particular the vector $w$, depends on the outer iteration~$i$. Using \gref{eq:decomp} we obtain
\begin{eqnarray*}
w^{[i]} &=& Z^{-1}y_i= Z^{-1}(\alpha_i z_1 + Z_2 p_i)\\
&=& \alpha_i e_1 + E_{n-1}p_i,
\end{eqnarray*}
where $e_1\in\mathbb{R}^n$ and $E_{n-1}\in\mathbb{R}^{n,n-1}$ contain the first and the last $n-1$ columns of the identity matrix, respectively. Recall that $w^{[i]}$ denotes the vector $w$ at outer iteration $i$. With $\alpha_i\rightarrow 1$ and $\|p_i\|\rightarrow 0$ as $i\rightarrow\infty$ for a convergent outer iteration we obtain 
$w^{[i]}\rightarrow e_1$. Hence in the limit $q(\eigb)$ only needs to be minimized over $\eigb_1$.

Noting that  
$\|Z\|_2$ can be normalized,  $\|w^{[i]}\|_2\rightarrow 1$ 
and $\|\tilde{w}^{[i]}\|_2 \rightarrow 0$, we see that 
the terms $\|Z\|_2\|w^{[i]}\|_2$ and $\|Z\|_2\|\tilde{w}^{[i]}\|_2$ in~\eqref{eq:2.5} and ~\eqref{eq:2.6} can be 
bounded by an arbitrarily small constant for large enough $i$. Therefore, as the outer iteration progresses, small  relative GMRES residual norms are rapidly obtained. 
Experimentally we see that the number of inner iterations is bounded
because $\|\tilde{w}^{[i]}\|_2\rightarrow 0$ at a rate proportional to the decrease of $\tau$. 
This is reflected by the lower bound in~\eqref{eq:iter_bound}, which is constant if  $\|\tilde{w}^{[i]}\|_2 \propto \tau$  (cf. Lemma 3.11 in~\cite{frei07}). 
As the outer iterations progress (i.e.\ for larger $i$) there is an initial decrease in the relative GMRES residual norms, as suggested by~\eqref{eq:r1},  
because $\|\tilde{w}^{[i]}\|_2 \approx 0$. 

\subsection{Standard preconditioner}

We now investigate the changes that occur when a preconditioner $P$ is introduced.  
Usually GMRES is used
with a preconditioner  to cluster the eigenvalues of the system matrix. The aim of this is to  reduce the bound on the term~\eqref{eq:min_max}
and hence improve the convergence bound~\eqref{eq:2.6}. We will see that this usually comes at the expense of increasing $\|\tilde \ww\|_2$, so that the number of GMRES iterations actually grows as inverse iteration proceeds (although this number is generally still lower than the number of GMRES iterations needed without a preconditioner). 

 Without loss of generality we consider a right
preconditioner for \gref{eq:linsys1}, that is 
\beq
BP^{-1}\tilde{x}_i = y_i, 
\eeq
where $ P^{-1}\tilde{x}_i = x_i$.
Although the eigenvalues of $BP^{-1}$ may be clustered, the eigenvectors will typically differ from those of $B$. Thus, although $y_i$ is an approximate
eigenvector of $B$, it is not usually an approximate eigenvector of the coefficient matrix $BP^{-1}$. This causes  the number of GMRES iterations to increase as
inverse iteration progresses,  as we now show.

In general we have $BP^{-1}\neq ZDZ^{-1}$ (with $D$ a diagonal matrix), that is,  $BP^{-1}$ does not have the same eigenvectors as $B$. Hence, with $BP^{-1}= \bar{Z}D\bar{Z}^{-1}$ and using the decomposition of $y_i$ we obtain
\[
w^{[i]} = \bar{Z}^{-1}y_i= \alpha_i \bar{Z}^{-1} z_1 + \bar{Z}^{-1}Z_2 p_i\rightarrow \alpha_i \bar{Z}^{-1} z_1,
\]
as $i\rightarrow\infty$ since $\|p_i\|\rightarrow 0$, but $\alpha_i \bar{Z}^{-1} z_1\neq e_1$ in general. 

Hence, there is no reason for $\|{\wt}^{[i]}\|_2$ to be small. Instead, $\|{\wt}^{[i]}\|_2\rightarrow c$ for some constant $c$ as $i$ increases (see
corresponding column in Table~\ref{t:cdde:U}) and we observe that for some problems $\|{\wt}^{[i]}\|_2 > |w_1^{[i]}|$, i.e., $c>1$.
Since $\|{\wt}^{[i]}\|_2$ does not approach $0$, but $\tau$ decreases as the outer iteration progresses, 
we would expect from~\eqref{eq:iter_bound} that the number of inner iterations increases with increasing~$i$, and indeed this is what our experiments in Section~\ref{sec:num_ex} show. 
Moreover, we find that $\|r_1\|_2$ is not significantly smaller than $\|r_0\|_2$ at later outer iterations. 
However, for large enough $i$ the terms $\|Z\|_2\|w^{[i]}\|_2$ or $\|Z\|_2\|\tilde{w}^{[i]}\|_2$ can be bounded by constants that are are still small 
relative to $\kappa_2(Z) = \|Z\|_2 \|Z^{-1}\|_2$, and the bounds~\eqref{eq:2.5} and \eqref{eq:2.6} can still capture GMRES convergence behavior, especially at later outer iterations (see Figure~\ref{f:cdde1_residuals}). 

To recover bounded numbers of inner iterations when preconditioning we must ensure that the preconditioner preserves the relevant direction given by the right-hand side. For this, we may construct a preconditioner such that 
\begin{enumerate}[(a)]
\item $BP_i^{-1}= \bar{Z}_iD_i\bar{Z}_i^{-1}$ such that (in the limit for large $i$) $\bar{Z}_i^{-1} z_1 \rightarrow e_1$, or
\item $BP^{-1}$ has the same eigenvectors as $B$.
\end{enumerate}

The next two subsections show how these preconditioners can be constructed.

\subsection{Tuned preconditioner}

Assume we have an a preconditioner $P$ such that $BP^{-1}$ is diagonalizable. Then $BP^{-1} = Z\Lambda Z^{-1}P^{-1} = \bar{Z}D\bar{Z}^{-1}$, where $D$ is a diagonal matrix. Since we want to recover bounded GMRES iteration counts, we want $y_i$ to be an approximate eigenvector of $BP^{-1}$. To achieve this, it is not necessary that $BP^{-1}$ has all the same eigenvectors as $B$, but it should have the same eigenvector in the direction of the sought eigenvector $z_1$, that is, we want to enforce $\bar{Z}^{-1} z_1 = e_1$. 

If we enforce this condition,  we obtain
\[
Z\Lambda Z^{-1}P^{-1}z_1 = \bar{Z}D\bar{Z}^{-1}z_1 = \bar{Z}De_1 = d_{11} z_1.
\]
Since $\Lambda = \text{diag}(\lambda_1,\ldots,\lambda_n)=\text{diag}(\gamma_1-\sigma,\ldots,\gamma_n-\sigma)$, and assuming that $\gamma_j\neq\sigma$, $\forall j$, the above requirement shows that $P^{-1}$ needs to satisfy
\beq
\label{eq:tuning}
P^{-1}z_1 = \frac{d_{11}}{\lambda_1} z_1.
\eeq
Hence, choosing a preconditioner $P$ such that \gref{eq:tuning} holds, or equivalently
\[
Pz_1 = \frac{\lambda_1}{d_{11}} z_1,
\]
yields $\bar{Z}^{-1} z_1 = e_1$. Note that $d_{11}\neq 0$ (otherwise $\lambda_j=0$ and $\gamma_j=\sigma$ for some $j$, a case we excluded). 

Clearly, \gref{eq:tuning} is infeasible as a preconditioner, since $z_1$, $\lambda_1$ and $d_{11}$ are unknown during the iteration. Hence, we propose to use a preconditioner $P_i$ at each iteration $i$ that satisfies
\beq
\label{eq:tunedpract}
P_i y_i = \lambda^{(i)}y_i =  (\gamma^{(i)}-\sigma) y_i,
\eeq
where $\gamma^{(i)}$ is an eigenvalue approximation for $\gamma_1$; it can be obtained as part of the the inverse iteration algorithm.
\begin{remark} Instead of \gref{eq:tunedpract} one could use 
\begin{itemize}
\item $P_i y_i = y_i$, since the direction is important for the preconditioner;
\item $P_i y_i = B y_i$, since $B y_i = (\gamma^{(i)}-\sigma) y_i + r_i = \lambda^{(i)} y_i + r_i$ with $r_i = (A-\gamma^{(i)}I)y_i$ the eigenvalue residual, and in the limit \gref{eq:tunedpract} holds;
\item $P_i y_i = A y_i$, since $A y_i = \gamma^{(i)} y_i + r_i$, which, in the limit, lies in the same direction.
\end{itemize}
The action of $P_i^{-1}$ on a vector $v$ can easily be obtained as an update of  $P^{-1}v$ using the Sherman-Morrison formula, that is
\[
P_i^{-1}v = P^{-1}v -\frac{P^{-1}y_i -(\lambda^{(i)})^{-1} y_i}{y_i^T P^{-1}y_i} y_i^T P^{-1}v.
\]
This can be implemented using only one extra linear solve with $P$ (to compute $P^{-1}y_i$) per outer iteration. Note that we assume that $\lambda^{(i)}\neq 0$.
\end{remark}

\begin{theorem}
Consider inverse iteration, where at each inner iteration a preconditioned system of the form 
$BP_i^{-1}\tilde{x}_i = y_i$ is solved with $P_i^{-1}\tilde{x}_i = x_i$, and $P_i$ satisfies  \gref{eq:tunedpract}. Assume that $BP_i^{-1}$ is diagonalizable $\forall i$, that is $BP_i^{-1} = Z_iD_iZ_i^{-1}$ with $D_i$ diagonal. Further, assume that $B$ is nonsingular, that is $\lambda^{(i)}=\gamma^{(i)}-\sigma\neq 0$ $\forall i$. Then
\begin{enumerate}[(i)]
\item $BP_i^{-1}y_i = y_i + \frac{r_i}{\gamma^{(i)}-\sigma}$, where $r_i = (A-\gamma^{(i)}I)y_i$ is the eigenvalue residual (that is, in the limit $BP_i^{-1}$ has an eigenvalue at $1$);
\item $w^{[i]}\rightarrow e_1$ as $i\rightarrow\infty$.
\end{enumerate}
\end{theorem}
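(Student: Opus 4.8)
The plan is to prove the two parts separately: (i) is a short algebraic identity that uses only the defining property of the (practical) tuned preconditioner, and (ii) is an asymptotic statement deduced from (i) together with the convergence of inverse iteration.

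For (i), I would start from the tuning condition $P_i y_i = \lambda^{(i)} y_i = (\gamma^{(i)}-\sigma)y_i$. Since $B$ is nonsingular, $\lambda^{(i)} = \gamma^{(i)}-\sigma \neq 0$, so $y_i$ is an eigenvector of $P_i$ and hence $P_i^{-1} y_i = (\lambda^{(i)})^{-1} y_i$. Multiplying on the left by $B$ and inserting the splitting
\[ B y_i = (A-\sigma I)y_i = (A-\gamma^{(i)} I)y_i + (\gamma^{(i)}-\sigma)y_i = r_i + \lambda^{(i)} y_i \]
gives $BP_i^{-1} y_i = (\lambda^{(i)})^{-1} B y_i = y_i + r_i/(\gamma^{(i)}-\sigma)$, which is the claimed identity. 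For the parenthetical remark, I would note that as inverse iteration converges the eigenvalue residual $r_i = (A-\gamma^{(i)}I)y_i$ tends to $0$ while $\gamma^{(i)}-\sigma$ stays bounded away from $0$ (e.g.\ $\gamma^{(i)}-\sigma\to\gamma_1-\sigma = \lambda_1 \neq 0$ for a fixed shift $\sigma\neq\gamma_1$), so $BP_i^{-1}y_i - y_i \to 0$ and the limit vector $z_1 = \lim_i y_i$ is an eigenvector of the limiting $BP_i^{-1}$ with eigenvalue $1$.

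For (ii), recall that when the coefficient matrix is $BP_i^{-1} = Z_i D_i Z_i^{-1}$ the relevant coordinate vector is $w^{[i]} = Z_i^{-1} y_i/\|y_i\|_2 = Z_i^{-1} y_i$ (using $\|y_i\|_2 = 1$). I would apply $Z_i^{-1}$ to the identity of part (i) and use $Z_i^{-1}(BP_i^{-1}-I) = (D_i-I)Z_i^{-1}$ to obtain
\[ (D_i - I)\,w^{[i]} = \frac{1}{\gamma^{(i)}-\sigma}\,Z_i^{-1} r_i \longrightarrow 0 \qquad (i\to\infty), \]
which holds provided $\|Z_i^{-1}\|_2$ (equivalently $\kappa_2(Z_i)$ with the eigenvectors normalized) stays bounded. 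Under the generic assumption that, for large $i$, $BP_i^{-1}$ has a simple eigenvalue $d^{(i)}_{11}\to 1$ with all other eigenvalues bounded away from $1$, this forces $w_j^{[i]} \to 0$ for $j = 2,\dotsc,n$; since $\|w^{[i]}\|_2 = 1$, the first component then satisfies $|w_1^{[i]}| \to 1$, and fixing the sign/phase of the first eigenvector gives $w^{[i]} \to e_1$. An equivalent route that mirrors the analysis in Section~\ref{sec:pre_gmres} is to use $y_i = \alpha_i z_1 + Z_2 p_i$, so that $w^{[i]} = \alpha_i Z_i^{-1} z_1 + Z_i^{-1} Z_2 p_i$; part (i) identifies $z_1$ as (in the limit) the eigenvector of $BP_i^{-1}$ for the eigenvalue near $1$, hence $Z_i^{-1} z_1 \to e_1$, while $\alpha_i\to 1$, $\|p_i\|_2\to 0$, and the second term vanishes.

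The algebra in (i) is immediate; the real care is in (ii), where $P_i$, $Z_i$, $D_i$ all vary with $i$, so passing to the limit is legitimate only under the standing genericity hypotheses — boundedness of $\kappa_2(Z_i)$, (uniform) simplicity and isolation of the eigenvalue of $BP_i^{-1}$ nearest $1$, and $\gamma^{(i)}-\sigma$ bounded away from $0$. I expect the main obstacle to be stating these cleanly so that the component-wise conclusion from $(D_i-I)w^{[i]}\to 0$ (equivalently ``$Z_i^{-1} z_1 \to e_1$'') goes through; the computation itself is routine, and all of these conditions are consistent with the assumed convergence of inverse iteration and the diagonalizability of $BP_i^{-1}$ for all $i$.
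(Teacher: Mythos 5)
Your part (i) is exactly the paper's computation: use $P_iy_i=(\gamma^{(i)}-\sigma)y_i$ to get $BP_i^{-1}y_i=(\gamma^{(i)}-\sigma)^{-1}By_i$ and split $By_i=(\gamma^{(i)}-\sigma)y_i+r_i$. For part (ii) the paper argues directly on the eigenvector matrix: since $y_i\to z_1$ and, by (i), $BP_i^{-1}y_i\to z_1$, in the limit the first column of $Z_i$ can be taken to be $z_1$ (eigenvalue $1$), so $w^{[i]}=Z_i^{-1}y_i\to[z_1,\tilde Z_2]^{-1}z_1=e_1$; your ``equivalent route'' via $w^{[i]}=\alpha_iZ_i^{-1}z_1+Z_i^{-1}Z_2p_i$ and $Z_i^{-1}z_1\to e_1$ is essentially this same argument. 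Your primary route, deducing $(D_i-I)w^{[i]}=(\gamma^{(i)}-\sigma)^{-1}Z_i^{-1}r_i\to 0$ and then killing the components $w_j^{[i]}$, $j\ge 2$, is genuinely different and arguably sharper: it makes explicit the genericity hypotheses (bounded $\|Z_i^{-1}\|_2$, the eigenvalue of $BP_i^{-1}$ nearest $1$ simple and isolated, $\gamma^{(i)}-\sigma$ bounded away from $0$) that the paper's limit argument uses only implicitly, so it buys a cleaner account of when the conclusion actually holds.

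One step in that primary route is wrong as stated: $\|w^{[i]}\|_2=1$ does not hold, since $w^{[i]}=Z_i^{-1}y_i$ and $Z_i$ is not unitary, so you cannot conclude $|w_1^{[i]}|\to 1$ from $w_j^{[i]}\to 0$, $j\ge2$, this way. The repair is easy: write $y_i=Z_iw^{[i]}=w_1^{[i]}z_1^{(i)}+\sum_{j\ge2}w_j^{[i]}z_j^{(i)}$ with unit-normalized columns of $Z_i$; then $w_j^{[i]}\to0$ for $j\ge2$ gives $\bigl\|y_i-w_1^{[i]}z_1^{(i)}\bigr\|_2\to0$, and $\|y_i\|_2=1$ forces $|w_1^{[i]}|\to1$, after which fixing the phase of $z_1^{(i)}$ (so that $z_1^{(i)}\to z_1$) yields $w^{[i]}\to e_1$. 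With that correction, and since your second route coincides with the paper's proof anyway, the argument is sound.
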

\begin{proof}
\begin{enumerate}[(i)]
\item Using the definition of $P_i^{-1}$ we obtain 
\[
BP_i^{-1}y_i = (\gamma^{(i)}-\sigma)^{-1} B y_i = y_i + \frac{r_i}{\gamma^{(i)}-\sigma},
\]
where we have used the fact that $By_i=(A-\sigma I)y_i = (\gamma^{(i)}-\sigma)y_i+r_i$.
\item We need to compute $w^{[i]} = Z_i^{-1}y_i$ as $i\rightarrow\infty$. Consider the eigendecomposition $BP_i^{-1} = Z_iD_iZ_i^{-1}$ as $i\rightarrow\infty$. Then $y_i\rightarrow z_1$ and, using part (i) we have  
\[
BP_i^{-1}y_i\rightarrow z_1,
\]
since $r_i\rightarrow 0$.
Therefore, in the limit $Z_i \rightarrow [z_1, \tilde{Z}_2]$, as $i\rightarrow\infty$, for some $\tilde{Z}_2$ and hence,
\[
w^{[i]} = Z_i^{-1}y_i \rightarrow  [z_1, \tilde{Z}_2]^{-1} z_1 = e_1.
\] 
\end{enumerate}
\end{proof}

We obtain $w^{[i]} \rightarrow e_1$. Since 
 $\|Z\|_2$ can be normalized, $\|w^{[i]}\|_2\rightarrow 1$, 
and $\|\tilde{w}^{[i]}\|_2\rightarrow 0$, the terms $\|Z\|_2\|w^{[i]}\|_2$ and $\|Z\|_2\|\tilde{w}^{[i]}\|_2$
in~\eqref{eq:2.5} and \eqref{eq:2.6} can be bounded by small constants for large enough $i$.  
Experimentally we see that the number of inner iterations is bounded 
because $\|w^{[i]}\|_2\rightarrow 0$ at a rate proportional to $\tau$
(cf.\ Theorem~6.22 in~\cite{frei07}). Additionally, as the outer iterations progress there is an initial decrease 
in the relative GMRES residual norms, that is, $\|r_1\|_2 \ll \|r_0\|_2$,  because 
$\|\tilde{w}^{[i]}\|_2 \approx 0$.

\subsection{Polynomial preconditioner}\label{ssec:polyprec}
As an alternative to tuning we can construct a preconditioner such that $BP^{-1}$ has the same eigenvectors as $B$. 
If we choose 
\[
P^{-1} = p(B),
\]
where $p(B)$ is a polynomial of $B$, then $p(B) = Zp(\Lambda) Z^{-1}$ and $BP^{-1} = Z\Lambda p(\Lambda) Z^{-1}$ has the same eigenvectors as $B$. In this case  the theory
for the unpreconditioned case applies.

We consider the (right) polynomially preconditioned system $ Bp(B)\tx_i=y_i$, where $x_i=p(B)\tx_i$ and 
\begin{align*}
p(z)=\sum\limits_{h=0}^{d}\mu_hz^h\in\Pi_d.
\end{align*}
Since $p(B)=Zp(\Lambda)Z^{-1}$, $y_i$ is still an approximate eigenvector of the preconditioned coefficient matrix $Bp(B)$ and it holds that 
\begin{equation*}
 \|r_k\|_2\leq \|Z\|_2\min\limits_{\stackrel{q_k\in\Pi_k}{q_k(0)=1}}
 \left(\sum_{j = 1}^n\vert w_jq_k(\lambda_jp(\lambda_j))\vert^2\right)^{\half}.
 \end{equation*}
Hence, all the weights $w$ of the unpreconditioned version are preserved. 
Typically, $p(B)$ is chosen such that
\begin{align*}
 B(p(B))\approx I\quad\text{or}\quad g(B):=I-Bp(B)\approx 0.
\end{align*}
In the latter choice $g\in\Pi_{d+1}$ is the residual polynomial, which can be written as
\begin{align*}
 g(z)=\prod\limits_{h=1}^{d+1}(1-\nu_h z)=1-\sum\limits_{h=1}^{d+1}\mu_{h-1}z^{h}.
\end{align*}
There are different strategies to choose the polynomial coefficients $\mu_h$, which 
can be determined from the $\nu_h$ recursively via
\begin{align*}
 g_{h}(z)=g_{h-1}(z)-\nu_hzg_{h-1}(z),\quad h=1,\ldots,d+1,~g_0(z)=1, 
\end{align*}
see, e.g.\ \cite{AhmSG12} for more details.
A thorough discussion of various choices for the polynomial coefficients is beyond the scope of this study and we restrict ourselves to a few selected, often
used approaches. More information on the choice of the polynomial can be found, e.g., in \cite{Freund1990,AshMO92,Gij95,AhmSG12,templatesLS,LiuMW15} and the
references therein.

A basic common choice \cite{AhmSG12} is to take the $\nu_h$ as reciprocal Chebychev nodes with respect to an interval $[a,b]$:
\begin{align}\label{polycoeffcheb}
 \phi_h=\frac{2h-1}{2(d+1)},\quad \nu_h=2\left(b+a-(b-a)\cos(\pi\phi_h)\right)^{-1},\quad h=1,\ldots,d+1.
\end{align}
Here, $a,b$ denote the smallest and largest eigenvalues (or approximations thereof) of $\Lambda(B)\subset\R$. For complex spectra, $a,b$ are the foci of the
ellipse enclosing $\Lambda(B)$. The values $a,b$ can be obtained from approximate eigenvalues of $A$ or $B$, e.g.\ by employing 
\texttt{eigs} with the 'SM' and 'LM' flags, or using the Ritz values generated by the Arnoldi process. Alternatively one can compute a
very small number of
(harmonic)  Ritz values \cite{Gij95,AhmSG12}, which can be generated by a few steps of an (harmonic) Arnoldi process.
When using this approach, it is important to ensure that $a,b$ have the same sign but in some of our examples this not the case and either $a$ or $b$ lie a little bit on the wrong side
of 
the origin. In these cases we slightly shifted $a,b$, e.g.\ if $b>0$ but $a<0$ is small we set $b\leftarrow b-2a$, $a\leftarrow -a$.

For complex spectra  a more sophisticated approach is proposed in \cite{Gij95}. The key idea is to consider the norm 
\begin{align}\label{polycoeffmin}
 \|g\|^2=\frac{1}{L}\intab{\Gamma}{}g(z)\overline{g(z)}\vert\mathrm{d}z\vert
\end{align}
induced by the scalar product $\langle f,g\rangle=\frac{1}{L}\intab{\Gamma}{}f(z)\overline{g(z)}\vert\mathrm{d}z\vert$.
Here $\Gamma$ is a piecewise linear contour approximating the shape of the spectrum of $B$ and $L$ is its arc length.
The coefficients $\mu_i$ in the polynomial are determined such that the above norm is minimized in a least-squares sense. 
Using (harmonic) Ritz values allows us to easily generate the piecewise linear contour, e.g.  using the convex hull of the Ritz values.
However, this approach also allows us to approximate the spectrum by non-convex objects such as a boomerang-shaped region, which can be beneficial in some cases.
Note that if $B$ is real, it is sufficient to incorporate only the part of $\Gamma$ with nonnegative imaginary parts. More details on the implementation of this
strategy are given in \cite{Gij95}. 

A polynomial preconditioner preserves the eigenvector basis and hence appears to be an ideal preconditioner to use within GMRES for linear systems arising
within inverse iteration (using the theory in this paper). However, we would like to note that 
a limitation of good polynomial preconditioners is that we require knowledge about the location of the spectrum of $B$.
Since we are actually seeking an
eigenvalue, the necessary information to construct a good polynomial preconditioner might be hard to obtain. We already mentioned the use of spectral estimates, which can be obtained by, e.g., (harmonic) Arnoldi processes. 
Since we are using inexact inverse iteration with GMRES as inner solver, one
 strategy deserving further study is to use the  upper Hessenberg matrix generated at  
outer iteration step $i-1$ to acquire spectral estimates for outer iteration step $i$. In a similar way, the generated basis matrices of the Krylov subspaces can be used to acquire the coefficients
by solving a least square problem along the lines of \cite{LiuMW15}.

For completeness we mention a third polynomial preconditioning strategy. Since iterative Krylov methods for linear systems work implicitly with polynomials
of $B$, we could in principle use any of these  methods as a preconditioner for the inner iteration. In other words, we could use (a small number of steps of) a Krylov
method within a Krylov method, which is GMRES here. However, since Krylov methods also depend on the right-hand side,  which determines the first basis vector in
the Krylov subspace,
the preconditioner will change with every iteration of GMRES in our consideration. Therefore, a flexible preconditioned Krylov method, such as flexible
GMRES~\cite{Saa93} must be applied, which is beyond the scope of this study. Similar approaches can be found in, e.g.  \cite{GijSZ15,DuSZ15,BG15}. The
advantage of this strategy is that one does not have to worry about the generation of the polynomial coefficients.
\section{Extensions and related issues} \label{sec:extensions}

In this section several extensions to inverse iteration for the standard eigenvalue problems are considered.
First we extend our analysis of GMRES within inverse iteration to generalized eigenvalue problems. Then,  we obtain bounds for block GMRES arising in inverse subspace iteration for the standard eigenvalue problem. 

\subsection{Generalized eigenvalue problems} \label{subsec:gen_eig}

Consider inverse iteration for the generalized eigenvalue 
problem $Ax = \lambda Mx$, where at least one of the matrices $A$ and $M$ is nonsymmetric. We shall also assume that the desired eigenvalue is finite in the case that 
$M$ is singular. 
For generalized eigenvalue problems, the linear systems we wish to solve in inverse iteration are of the form 
$(A - \sigma M)\xx = M\yy$, where 
 we normalize $\yy$ so that 
$\|M\yy\|_2  = 1$. The eigenvalue approximation can then be obtained by a generalization of the Rayleigh quotient, e.g.\ 
$(\xx^TM^TA\xx)/\|M\xx\|^2$.
Let us now assume that $A - \sigma M$ is diagonalizable, i.e.\ 
$A - \sigma M = Z \Lambda Z^{-1}$ where $Z$ is a matrix 
of eigenvectors and $\Lambda$ is a diagonal matrix of eigenvalues. 
We also assume without loss of generality that $\yy$ approximates 
$z_1$, the first column of $Z$.

Setting 
\[w^{[i]} = Z^{-1}\yy = \begin{bmatrix} w_1^{[i]} & {\wt}^{[i]}\end{bmatrix}^T\] 
 we would again expect that 
when $\yy$ is a good approximation of $z_1$ that $|w_1^{[i]}|\gg \|{\wt}^{[i]}\|_2$. Again, for clarity, in the rest of this section, we drop the outer
iteration index $i$ from $w$ and related quantities where appropriate. The GMRES bounds in this case becomes 

\begin{equation}\label{eq:gep_gmres}
\begin{aligned}
\|r_k\|_2 &= \minq\|q(A-\sigma M)M\yy\|_2 \\
&=\minq \|Z q(\Lambda) (Z^{-1}MZ) w\|_2 = \minq \|Z q(\Lambda) f\|_2, 
\end{aligned}
\end{equation}
where $f:= Z^{-1}MZw$. Note that if $w$ approximates the first 
unit vector, then $f$ approximates the first column of $Z^{-1}MZ$. 
In this case an analogous bound to~\eqref{eq:2.5} is
\begin{equation}\label{eq:2.5gep}
\|r_k\|_2 \le \|Z\|_2 \minq\| F q(\Lambda)e\|_2 
= \|Z\|_2 \minq \left(\sum_{j = 1}^n |f_j q(\lambda_j)|^2 \right)^{\frac{1}{2}},
\end{equation}
where $F:= \diag{f_1, \dotsc, f_n}$. 
Thus, as in Section~\ref{sec:gmres_bound}, we  have a weighted polynomial approximation. 
However,  the weights 
may all be large because $M\yy$ is not close to an eigenvector of $A - \sigma M$ in general.

Considering~\eqref{eq:2.6}, we find that an analogous bound is 
\begin{equation}\label{eq:2.6gep}
\begin{aligned}
\|r_k\|_2
&\le  \|Z\|_2  \min_{\substack{q\in\Pi_{k-1},\\q(0)=1}} 
\Big(\sum_{j = 2}^n |\tilde f_j q(\lambda_j)|^2 \Big)^{\frac{1}{2}}\\
&\le \|Z\|_2 \|\tilde f\|_2 \min_{\substack{q\in\Pi_{k-1},\\q(0)=1}} 
\max_{\lambda\in\{\lambda_2,\dotsc,\lambda_n\}}|q(\lambda)|,
\end{aligned}
\end{equation}
where $\tilde f_j = (1-\lambda_j/\lambda_1)f_j$. 
Again, there is no reason for $\|\tilde f\|_2$ to be small. 
We conclude that for generalized eigenproblems, unpreconditioned GMRES may not show a residual reduction similar to the case $M=I$.

One way to reintroduce this behavior is by using the tuned preconditioner $P_i = P(I - y_iy_i^H) + My_iy_i^H$, which satisfies $P_iy_i = My_i$ \cite{FrSp07b}. We stress again that application of the tuned preconditioner requires  only one extra matrix-vector
product $P^{-1}My_i$ per outer iteration \cite{FrSp07b}. 
We also note that in the absence of a good preconditioner, the choice $P=I$ should 
at least recover the behavior observed for  inverse iteration with unpreconditioned GMRES applied to  the standard eigenvalue problem.
It is also obvious that the nice properties of polynomial preconditioners do not hold for the generalized eigenproblem, since the weight vectors $f_j$ 
will be the same as in the unpreconditioned case.  
One could  add tuning to a polynomial preconditioner, but from numerical experience this strategy was not competitive.

\subsection{Block GMRES}
\label{sec:bgmres}
The linear system in \gref{eq:linsys1} can be generalized to a block linear system,
that is, a linear system with multiple right-hand sides of the form
\beq
\label{eq:blocklinsys1}
BX_i = Y_i,
\eeq
where $B=A-\sigma I$ as before, and $Y_i$, $X_i\in\mathbb{C}^{n\times u}$, $u\lll n$. Systems of this form arise when seeking an invariant subspace using
inverse subspace iteration~\cite{RSS06}, a block version of the inverse iteration. If these systems are solved by block-GMRES, we obtain similar bounds to  \gref{eq:2.5} (see also Theorem 2.2
in~\cite{TPPW14}), and can gain insight into the convergence behavior of block-GMRES as the inverse subspace iterations progress.

\begin{lemma}\label{lem:gen_eig}
Suppose that $B$ is diagonalizable, that is $B=Z\Lambda Z^{-1}$, and block-GMRES is used to solve the linear system with multiple right-hand sides of the form $BX=Y$,
$Y\in\mathbb{C}^{n\times u}$. The residual $Y-BX_k$ associated with the approximate solution $X_k$, obtained with $k$ iterations of block-GMRES starting with
$X_0=0$ is such that
\begin{align*}
\|Y-BX_k\|_F& \le \|Z\|_2\min_{\fmat_i\in\mathbb{C}^{u\times u}} \left\|  W+ \sum_{i=1}^k \Lambda^i W\fmat_i\right\|_F\\
& = \|Z\|_2\min_{\substack{q_{\ell,j} \in \Pi_k,\\q_{\ell,j}(0) = \delta_{\ell,j}}}   \left\|\begin{bmatrix}  \sum_{\ell=1}^u q_{\ell,1}(\Lambda)w_\ell\\
 \vdots\\
  \sum_{\ell=1}^u q_{\ell,u}(\Lambda)w_\ell\\
 \end{bmatrix}
 \right\|_2,
\end{align*}
where $ W = Z^{-1}Y\in\mathbb{C}^{n\times u}$, $w_\ell$ is the $\ell$th column of $W$, $\Pi_k$ is the space of polynomials of at most degree $k$ and
$\delta_{\ell,j}$ is the Kronecker delta. 
\end{lemma}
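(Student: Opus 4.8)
The plan is to mirror the single-vector argument behind~\eqref{eq:2.5}, but now carried out column-by-column for the block residual.  First I would recall the structure of block-GMRES: starting from $X_0 = 0$, after $k$ steps the $\ell$th column of the approximate solution lies in the block Krylov space $\mathcal{K}_k(B, Y) = \operatorname{span}\{Y, BY, \dotsc, B^{k-1}Y\}$, so that $X_k = \sum_{i=1}^{k} B^{i-1} Y \fmat_i$ for some coefficient matrices $\fmat_i\in\mathbb{C}^{u\times u}$, and block-GMRES chooses these $\fmat_i$ to minimize $\|Y - BX_k\|_F$ over all such choices.  Hence
\[
\|Y - BX_k\|_F = \min_{\fmat_i\in\mathbb{C}^{u\times u}} \left\| Y - \sum_{i=1}^{k} B^i Y \fmat_i \right\|_F .
\]

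Next I would substitute the diagonalization $B = Z\Lambda Z^{-1}$ and $Y = ZW$ with $W = Z^{-1}Y$.  Then $B^i Y = Z\Lambda^i W$, so the bracketed quantity equals $Z\bigl(W - \sum_{i=1}^k \Lambda^i W\fmat_i\bigr)$.  Using the submultiplicativity of the Frobenius norm under multiplication by a matrix in the spectral norm, $\|ZV\|_F \le \|Z\|_2 \|V\|_F$, and absorbing the sign into the (still free) coefficient matrices gives the first displayed inequality
\[
\|Y - BX_k\|_F \le \|Z\|_2 \min_{\fmat_i\in\mathbb{C}^{u\times u}} \left\| W + \sum_{i=1}^k \Lambda^i W\fmat_i \right\|_F .
\]

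The remaining step is to re-express the minimization as one over polynomials.  Write $\fmat_i = [(\fmat_i)_{\ell,j}]$ and let $w_\ell$ denote the $\ell$th column of $W$.  The $j$th column of $W + \sum_{i=1}^k \Lambda^i W\fmat_i$ is $\sum_{\ell=1}^u \bigl(\delta_{\ell,j} + \sum_{i=1}^k (\fmat_i)_{\ell,j}\Lambda^i\bigr) w_\ell$; defining $q_{\ell,j}(z) = \delta_{\ell,j} + \sum_{i=1}^k (\fmat_i)_{\ell,j} z^i$, each $q_{\ell,j}\in\Pi_k$ with $q_{\ell,j}(0) = \delta_{\ell,j}$, and conversely every such polynomial family arises from some choice of the $\fmat_i$.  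Stacking the $u$ columns into a single long vector and noting that $\|\cdot\|_F$ of the matrix equals $\|\cdot\|_2$ of that stacked vector yields the second displayed expression, completing the proof.

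I do not anticipate a serious obstacle here; the argument is a direct blocking-up of the proof of~\eqref{eq:2.5}.  The one place to be careful is the bijection between coefficient matrices $\{\fmat_i\}_{i=1}^k$ and admissible polynomial families $\{q_{\ell,j}\}$, namely checking that the normalization $q_{\ell,j}(0) = \delta_{\ell,j}$ (rather than $q_{\ell,j}(0)=1$ as in the single right-hand side case) is exactly what encodes the identity leading term $W$ in $Y - \sum_{i\ge1} B^i Y\fmat_i$; and the trivial but worth-stating identity $\|M\|_F = \|\operatorname{vec}(M)\|_2$ used to pass from the Frobenius norm of the residual matrix to the Euclidean norm of the stacked column vector.
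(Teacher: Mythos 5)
Your proposal is correct and follows essentially the same route as the paper's proof: express the block-GMRES residual as a minimization over coefficient matrices $\fmat_i$, pull out $\|Z\|_2$ via $\|ZV\|_F \le \|Z\|_2\|V\|_F$ (the paper does this through vectorization and Kronecker-product properties, which is the same fact), and then encode the identity term by the normalization $q_{\ell,j}(0)=\delta_{\ell,j}$, equivalent to the paper's device of setting $\fmat_0 = I_u$. No gaps.
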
 
\begin{proof}
Since $X_k\in\text{span}\{Y,BY,\ldots, B^{k-1}Y\}$ and block-GMRES minimizes the residual we have 
\begin{align*}
\|Y-BX_k\|_F
& \le \min_{\fmat_i\in\mathbb{C}^{u\times u}}\|Y+\sum_{i=1}^k B^i Y \fmat_i\|_F \\
&=\min_{\fmat_i\in\mathbb{C}^{u\times u}} \left\|  (I\otimes Z) \vecop{W+ \sum_{i=1}^k \Lambda^i W\fmat_i}\right\|_2\\
& \le \|Z\|_2\min_{\fmat_i\in\mathbb{C}^{u\times u}} \left\|  W+ \sum_{i=1}^k \Lambda^i W\fmat_i\right\|_F,
\end{align*}
where we have used vectorization and the properties of the Kronecker product in the second step. If we introduce the additional matrix $\fmat_0 = I_u$ then 
\[\left(W+ \sum_{i=1}^k \Lambda^i W\fmat_i\right)e_j = \sum_{i=0}^k \Lambda^i W\fmat_i e_j = \sum_{i=0}^k \Lambda^i \sum_{\ell=1}^u \fvec_{\ell j}^{({i})}w_\ell =
\sum_{\ell=1}^u q_{\ell,j}(\Lambda)w_\ell  ,\]
where $e_j$ is the $j$th unit vector, $w_\ell$ is the $\ell$th column of $W$, $\fvec_{\ell j}^{(i)}$ is the $(\ell,j)$th element of $\fmat_i$ and  $q_{\ell,j}(\lambda)
= \sum_{i=0}^k \lambda^i \fvec_{\ell j}^{(i)}$, $j,\ell = 1,\dotsc,u$.  Clearly $q_{\ell,j} \in \Pi_k$. Additionally, the condition $\fmat_0=I_u$ means that
$q_{\ell,j}(0) = \delta_{\ell,j}$. 

Thus,
\begin{align*}
 \left\|  W+ \sum_{i=1}^k \Lambda^i W\fmat_i\right\|_F
 &= \left\|\begin{bmatrix} \sum_{\ell=1}^u q_{\ell,1}(\Lambda)w_\ell, & \dotsc, & \sum_{\ell=1}^u q_{\ell,u}(\Lambda)w_\ell \end{bmatrix}  \right\|_F\\
 &= \left\|\begin{bmatrix}  \sum_{\ell=1}^u q_{\ell,1}(\Lambda)w_\ell\\
 \vdots\\
  \sum_{\ell=1}^u q_{\ell,u}(\Lambda)w_\ell\\
 \end{bmatrix}
 \right\|_2,
 \end{align*}
 from which the result follows. 
\end{proof}

A consequence of Lemma~\ref{lem:gen_eig} is the following block-GMRES bound. 
\begin{theorem}
\label{th:gen_eig}
Suppose that $B$ is diagonalizable, that is $B=Z\Lambda Z^{-1}$, and block-GMRES is used to solve the linear system with multiple right-hand sides of the form $BX=Y$,
$Y\in\mathbb{C}^{n\times u}$. The residual $Y-BX_k$ associated with the approximate solution $X_k$, obtained with $k$ iterations of block-GMRES starting with
$X_0=0$, is such that
\begin{equation}
\label{eq:boundgen_eig}
\|Y-BX_k\|_2\le \|Z\|_2 \minq \left(\sum_{\ell=1}^u\sum_{j = 1}^n |\ww_{j\ell} q(\eigb_j)|^2 \right)^{\frac{1}{2}}
\end{equation}
where $Z^{-1}Y = \WW$, and $\ww_{j\ell}$ is the $(j,\ell)$th entry of $\WW$.
\end{theorem}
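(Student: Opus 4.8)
The plan is to derive Theorem~\ref{th:gen_eig} as a direct specialization of Lemma~\ref{lem:gen_eig}, by bounding the mixed polynomial minimization over the matrices $\fmat_i$ by a scalar polynomial minimization. First I would start from the second line of Lemma~\ref{lem:gen_eig}, namely
\[
\|Y-BX_k\|_F \le \|Z\|_2 \min_{\substack{q_{\ell,j}\in\Pi_k,\\ q_{\ell,j}(0)=\delta_{\ell,j}}}
\left\| \begin{bmatrix} \sum_{\ell=1}^u q_{\ell,1}(\Lambda)w_\ell\\ \vdots\\ \sum_{\ell=1}^u q_{\ell,u}(\Lambda)w_\ell \end{bmatrix}\right\|_2,
\]
and restrict the feasible set to the diagonal choice $q_{\ell,j}(\lambda) = \delta_{\ell,j}\,q(\lambda)$ for a single scalar polynomial $q\in\Pi_k$ with $q(0)=1$; this clearly satisfies the interpolation constraints $q_{\ell,j}(0)=\delta_{\ell,j}$, so it is admissible and can only increase the minimum. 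With this choice the $j$th block of the stacked vector collapses to $q(\Lambda)w_j$, so the Frobenius norm of the block column becomes $\big(\sum_{j=1}^u \|q(\Lambda)w_j\|_2^2\big)^{1/2}$.

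Next I would expand each $\|q(\Lambda)w_j\|_2^2$ componentwise. Writing $w_j$ for the $j$th column of $W=Z^{-1}Y$ and denoting its entries $\ww_{1j},\dots,\ww_{nj}$, and using that $q(\Lambda)=\diag{q(\eigb_1),\dots,q(\eigb_n)}$, we get $\|q(\Lambda)w_j\|_2^2 = \sum_{i=1}^n |\ww_{ij}\,q(\eigb_i)|^2$. Summing over $j$ and relabeling the eigenvalue index as $j$ and the column index as $\ell$ (to match the statement's notation $\ww_{j\ell}$) yields
\[
\|Y-BX_k\|_F \le \|Z\|_2 \minq \left(\sum_{\ell=1}^u\sum_{j=1}^n |\ww_{j\ell}\,q(\eigb_j)|^2\right)^{1/2}.
\]
Finally, since $\|Y-BX_k\|_2 \le \|Y-BX_k\|_F$ for any matrix, the stated bound~\eqref{eq:boundgen_eig} follows immediately.

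There is essentially no hard part here: the only thing to be careful about is the bookkeeping of indices — the lemma uses $\ell$ for columns of $W$ and $j$ for right-hand-side columns of the block system, whereas the theorem uses $j$ for the eigenvalue/eigenvector index and $\ell$ for the columns of $W$, so I would make the index renaming explicit to avoid confusion. One should also note in passing that the restriction to a diagonal, scalar polynomial is exactly the step that makes the bound a genuine relaxation: it discards the coupling between right-hand sides that block-GMRES can exploit, which is why this bound is expected to be pessimistic when the columns of $Y$ span a good approximate invariant subspace but is nonetheless the natural analogue of~\eqref{eq:2.5}. If desired, one could also record the intermediate inequality $\|Y-BX_k\|_2 \le \|Z\|_2 \minq \|Wq(\Lambda)\|_F$ before expanding into the double sum, but this adds nothing essential.
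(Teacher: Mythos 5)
Your proposal is correct and follows essentially the same route as the paper's proof: restricting the minimization in Lemma~\ref{lem:gen_eig} to the diagonal choice $q_{\ell,j}=\delta_{\ell,j}q$ with a single scalar $q\in\Pi_k$, $q(0)=1$ (equivalently $\fmat_i=\fvec_i I$), collapsing the stacked vector to blocks $q(\Lambda)w_\ell$, expanding in the eigenvalue components, and finishing with $\|Y-BX_k\|_2\le\|Y-BX_k\|_F$. The index bookkeeping and the remark that this relaxation discards the block coupling are accurate but do not change the substance.
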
 
\begin{proof}
In Lemma~\ref{lem:gen_eig}, replace the minimizing polynomial $q$ by $\hat q$, where $\hat q_{\ell,j}(\lambda) \equiv 0$, $j\ne \ell$ and  
\[\hat q_{11}(\lambda) = \hat q_{22}(\lambda) = \dots = \hat q_{uu}(\lambda) \equiv \hat q(\lambda).\]
(Note that this is equivalent to choosing $\fmat_i = \fvec_i I$, $\fvec_i$ constant, $i = 1,\dotsc, k$.)
Then, 
\[\|Y-BX_k\|_F \le \|Z\|_2\minq \left\|\begin{bmatrix}  q(\Lambda)w_1\\
 \vdots\\
q(\Lambda)w_u\\
 \end{bmatrix}
 \right\|_2\!
=\!\|Z\|_2\minq\Big(\sum_{\ell=1}^u\sum_{j = 1}^n |\ww_{j\ell} q(\eigb_j)|^2\Big)^{\half}\!\!.
\]
The result follows from $\|Y-BX_k\|_2\le\|Y-BX_k\|_F$.
\end{proof}

The key idea in Lemma~\ref{lem:gen_eig} and Theorem~\ref{th:gen_eig} is, again, to express the right-hand side $Y$ in the eigenvector basis of $B$,
i.e., 
\[
W = Z^{-1}Y = \begin{bmatrix}W_1\\W_2\end{bmatrix},
\]
where $W_1\in\mathbb{C}^{u\times u}$ and $W_2\in\mathbb{C}^{(n-u)\times u}$.
We write the right-hand side $Y_i\in\mathbb{C}^{n\times u}$ in the form
\[
Y_i = Z_1 Y_i^1 + Z_2 Y_i^2,
\]
where $Z_1\in\mathbb{C}^{n\times u}$ and $Z_2\in\mathbb{C}^{n\times (n-u)}$ are the first $u$ and the last $n-u$ columns of $Z$, that is $B = [Z_1\,Z_2]\Lambda
[Z_1\,Z_2]^{-1}$, and $Y_i^1 \in\mathbb{C}^{u\times u}$ is nonsingular and $Y_i^2\in\mathbb{C}^{(n-u)\times u}$.
In the generic situation, we have that inverse
subspace iteration converges, e.g. $\text{ran}(Y_i)$ converges to  $\text{ran}(Z_1)$ and hence $\|Y_i^2\|\rightarrow 0$. Hence for inverse subspace iteration
\[
W^{[i]} = Z^{-1}Y_i = Z^{-1} ( Z_1 Y_i^1 + Z_2 Y_i^2)= E_1 Y_i^1 + E_2 Y_i^2 = \begin{bmatrix}W_1^{[i]}\\W_2^{[i]}\end{bmatrix},
\]
where $E_1$ and $E_2$ are the first $u$ and the last $n-u$ columns of the identity matrix respectively. As $\|Y_i^2\|\rightarrow 0$ we have, in the limit, 
\[
W^{[i]}\approx \begin{bmatrix}W_1^{[i]}\\0\end{bmatrix},
\]
where $W_1^{[i]}\in\mathbb{C}^{u\times u}$ and otherwise $\|W_1^{[i]}\|\gg \|W_2^{[i]}\|$, similar to the case where $u=1$ in the main part of this paper. In  light of 
Lemma~\ref{lem:gen_eig} and Theorem~\ref{th:gen_eig} this means that a lot of entries of $w_\ell^{[i]}$, the $\ell$th column of $W^{[i]}$ are small or zero. In
the limit the bound in \eqref{eq:boundgen_eig} becomes
\[
\label{eq:boundgen_eig2}
\|Y_i-BX_k\|_2\le \|Z\|_2 \minq \left(\sum_{\ell=1}^u\sum_{j = 1}^u |\ww_{j\ell}^{[i]} q(\eigb_j)|^2 \right)^{\frac{1}{2}}
\]
and hence the minimizing polynomial will focus on minimizing over the relevant sought spectrum, e.g. $q(\lambda_1)\,\ldots,q(\lambda_u)$.

This property is violated when a preconditioner is applied in block GMRES, but can be overcome, in a similar way to the case $u=1$, by a tuned preconditioner,
e.g. by using a preconditioner $P_i$ which satisfies $P_iY_i = BY_i$, see~\cite{RSS06} for details.

It is also possible to use block GMRES within inverse subspace iteration for generalized eigenvalue problems. In this case the linear system that must be solved is of the form $(A-\sigma M)X_i = MY_i$, where again $X_i,Y_i\in \mathbb{C}^{n\times u}$. Assuming that $A-\sigma M = Z\Lambda Z^{-1}$ is diagonalizable, our results carry over to this case, and we obtain the following theorem. 

\begin{corollary}
\label{th:gen_eig_blk}
Suppose that $A-\sigma M$ is diagonalizable, that is $A-\sigma M=Z\Lambda Z^{-1}$, and block-GMRES is used to solve the linear system with multiple right-hand sides of the form $(A-\sigma M)X=MY$,
$Y\in\mathbb{C}^{n\times u}$. The residual $MY-(A-\sigma M)X_k$ associated with the approximate solution $X_k$, obtained with $k$ iterations of block-GMRES starting with
$X_0=0$, is such that
\begin{equation}
\label{eq:boundgen_eig_gen}
\|MY-(A-\sigma M)X_k\|_2\le \|Z\|_2 \minq \left(\sum_{\ell=1}^u\sum_{j = 1}^n |h_{j\ell} q(\eigb_j)|^2 \right)^{\frac{1}{2}}
\end{equation}
where $Z^{-1}MY = H$, and $h_{j\ell}$ is the $(j,\ell)$th entry of $H$.
\end{corollary}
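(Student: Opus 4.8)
The plan is to mimic the proof of Theorem~\ref{th:gen_eig} almost verbatim, replacing the standard eigenvalue setup $BX=Y$ by the generalized one $(A-\sigma M)X=MY$ and keeping track of the fact that the right-hand side is now $MY$ rather than $Y$. First I would observe that block-GMRES applied to $(A-\sigma M)X=MY$ with $X_0=0$ produces iterates $X_k\in\text{span}\{MY,(A-\sigma M)MY,\dotsc,(A-\sigma M)^{k-1}MY\}$ and, by the residual-minimizing property, $\|MY-(A-\sigma M)X_k\|_F\le\min_{\fmat_i}\|MY+\sum_{i=1}^k(A-\sigma M)^i MY\fmat_i\|_F$. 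Substituting $A-\sigma M=Z\Lambda Z^{-1}$, vectorizing, and using the Kronecker-product identity $\|(I\otimes Z)\,\vecop{\cdot}\|_2\le\|Z\|_2\|\cdot\|_F$ exactly as in Lemma~\ref{lem:gen_eig} turns this into $\|Z\|_2\min_{\fmat_i}\|H+\sum_{i=1}^k\Lambda^i H\fmat_i\|_F$ with $H=Z^{-1}MY$.

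Next I would invoke Lemma~\ref{lem:gen_eig} itself: it is stated for a generic right-hand side, so applying it with $Y$ there replaced by $MY$ (equivalently with $W$ replaced by $H=Z^{-1}MY$) immediately yields the polynomial form of the bound with weights $h_{j\ell}$. Then, following the proof of Theorem~\ref{th:gen_eig}, I would restrict the minimization by taking the diagonal scalar choice $\fmat_i=\fvec_i I$, i.e. $\hat q_{\ell,j}\equiv 0$ for $j\ne\ell$ and $\hat q_{11}=\dots=\hat q_{uu}\equiv\hat q$, which collapses the block polynomial minimization to a single scalar polynomial $q\in\Pi_k$ with $q(0)=1$. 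This gives $\|MY-(A-\sigma M)X_k\|_F\le\|Z\|_2\minq\bigl(\sum_{\ell=1}^u\sum_{j=1}^n|h_{j\ell}q(\eigb_j)|^2\bigr)^{1/2}$, and finally $\|MY-(A-\sigma M)X_k\|_2\le\|MY-(A-\sigma M)X_k\|_F$ delivers \eqref{eq:boundgen_eig_gen}.

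Since every individual step is either a direct citation of an already-proved result (Lemma~\ref{lem:gen_eig}, Theorem~\ref{th:gen_eig}) or a purely mechanical substitution, there is essentially no obstacle here; the statement is a corollary precisely because it follows by the same argument with cosmetic changes. The only point that needs a word of care is making sure that the Krylov space for block-GMRES is built from $MY$ (not $Y$) and that $Z$ is the eigenvector matrix of $A-\sigma M$, so that $H=Z^{-1}MY$ is the correct weight matrix --- this is the generalized-eigenproblem analogue of the remark in Section~\ref{subsec:gen_eig} that the weights become $f=Z^{-1}MZw$ rather than $w$. I would therefore keep the write-up short: state that the proof is identical to that of Theorem~\ref{th:gen_eig} with $B$ replaced by $A-\sigma M$ and $Y$ replaced by $MY$, spelling out only the first inclusion $X_k\in\text{span}\{MY,\dotsc,(A-\sigma M)^{k-1}MY\}$ and the identification $H=Z^{-1}MY$ explicitly, and leaving the rest to the reader.
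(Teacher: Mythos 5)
Your proposal is correct and matches the paper's own argument, which simply states that the proof is the same as those of Lemma~\ref{lem:gen_eig} and Theorem~\ref{th:gen_eig} with $W$ replaced by $H=Z^{-1}MY$ throughout. Your extra care about building the block Krylov space from $MY$ and using the eigenvector matrix of $A-\sigma M$ is exactly the right (and only) point to check.
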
 
\begin{proof}
The proof is the same as the proofs of Lemma~\ref{lem:gen_eig} and Theorem~\ref{lem:gen_eig} with $W$ replaced by $H$ throughout. 
\end{proof}

As in Section~\ref{subsec:gen_eig}, we see that although the block GMRES residual can be expressed in terms of a weighted polynomial approximation, there is no
reason for any of the weights to be small. However, tuning can also be applied within block GMRES to accelerate convergence, as in~\cite{ElmXue11}. 

\section{Numerical experiments} \label{sec:num_ex}
In this section we consider GMRES convergence within inverse iteration and the bounds previously 
discussed for nonsymmetric matrices whose properties are summarized in Table~\ref{t:ex1}.
The matrix \cdfd~is taken from~\cite{GolubYe00, frei07}, whereas \cdde{} and \olm{}
~are from the matrix market. 
In Table~\ref{t:ex1}, the spectral norms and condition numbers of the eigenvector matrices of the unpreconditioned $B$ as well as $BP^{-1}$ for different
standard preconditioners 
are listed. The standard preconditioners $P$ are incomplete LU factorizations with three different drop tolerances $\theta$.
To mimic a similar increasing quality of the polynomial preconditioners, the polynomial degrees are set to $d=5,10,15$. (Note that the polynomial
preconditioned matrices have the same eigenvector matrix as $B$.) 
For the coefficients of $p(B)$, we first generate a small number of Ritz and harmonic Ritz values of $B$.
If these Ritz values are all real or only have small imaginary parts, the coefficient generation \eqref{polycoeffcheb} based on the reciprocal Chebychev nodes
is used, whereas the approach \eqref{polycoeffmin} by \cite{Gij95} is used in the case of complex Ritz values. The standard and polynomial preconditioners are kept
unchanged during the outer iteration, i.e.\ they are computed only once at the start.
We employ two variants of the tuned preconditioners, which satisfy $P_iy_i=y_i$ and $P_iy_i=Ay_i$.
We also investigated tuning with $P_iy_i = (\gamma^{(i)} - \sigma) y_i$ as in~\eqref{eq:tunedpract} but the results were similar to the simpler $P_iy_i=y_i$ and
so have not been included. 

Note that we selected test examples of comparably small sizes in order to be able to compute the eigendecompositions needed for the weight
vectors $w$. The effects and performance gains resulting from the application of tuned preconditioners have been demonstrated with large matrices, e.g.\ in
\cite{SzyX11,Mar16}.

For each problem we run inverse iteration with an 
initial eigenvector approximation of $y_0 = \frac{1}{n}[ 1, \ 1, \ \dotsc, \ 1]^T$. 
We let $\rho_i$ be the inverse iteration residual, so that 
\begin{equation}\label{eq:iires}
\rho_i = Ax_{i} - \lambda_{i} x_{i}.
\end{equation}
The shifts $\sigma$ are as in Chapter 6 of~\cite{frei07}.
We solve the linear system using GMRES with a zero initial guess and a decreasing tolerance of 
$\tau = \min\{\delta,\delta\|\rho_{i-1}\|_2\}$. 
We choose $\delta = 0.1$ except for \cdde{} ($\delta = 0.001$); 
these are the same values as in Chapter~6 of~\cite{frei07}.

\begin{table}
\footnotesize
\setlength{\tabcolsep}{0.5em}
\centering
\caption{Matrices used in examples.}
\label{t:ex1}
\begin{tabular}{l r r r |r r | rr | rr }
& & \multicolumn{2}{c}{Unpreconditioned} & \multicolumn{2}{|c}{$P$ ($10^{-1}$)} & \multicolumn{2}{|c}{$P$ ($10^{-2}$)} & \multicolumn{2}{|c}{$P$ ($10^{-3}$)}\\
Matrix & $n$ & $\|Z\|_2$ & $\kappa_2(Z)$ & $\|Z\|_2$ & $\kappa_2(Z)$ & $\|Z\|_2$ & $\kappa_2(Z)$ & $\|Z\|_2$ & $\kappa_2(Z)$\\
\hline
\cdfd{} & 1024 & 4.6 & 1.2e+03& 7.6 & 1.5e+14& 5.7 & 3.4e+06& 6.9 & 7.1e+04\\
\cdde{} & 961 & 2.8 &  17& 7.4 & 5.5e+10& 5.2 & 1e+05& 4.3 & 1.8e+03\\
\olm{} & 2000 & 1.4 &  81&  11 & 5.1e+05&   7 & 3.1e+04& 2.7 & 1.3e+03\\

\end{tabular}
\end{table}
At first we have a detailed look at the progress of the outer iteration for the matrix \cdde{}~using no preconditioner,
ILU with $\theta=10^{-2}$ and the corresponding two tuned variants, and a polynomial preconditioner with deg$(p)=d=10$. 
 Tables~\ref{t:cdde:U}--\ref{t:cdde:T} summarize the changes in the relevant quantities as the outer iteration proceeds, including the quality of the
eigenpair approximations, the components in $w^{[i]}$, the constants in the GMRES bounds \eqref{eq:2.5} and~\eqref{eq:2.6}, and the number of executed GMRES steps. For the tuned preconditioners, also the progress of the spectral norms and condition numbers of the eigenvector matrices is given. Note that for clarity we drop the outer iteration index $i$ from $w$ for the remainder of this section.

\begin{table}
\footnotesize
\setlength{\tabcolsep}{0.5em}
\centering
\caption{\texttt{cdde1} without preconditioning, with untuned ILU preconditioning (drop tolerance $10^{-2}$) and with polynomial preconditioning 
(polynomial degree 10): inverse iteration residual $\|\rho_{i-1}\|_2$, sizes of components of $w$, constants in~\eqref{eq:2.5} and~\eqref{eq:2.6} and 
GMRES iteration numbers at each outer iteration $i$.}
\label{t:cdde:U}
\begin{tabular}{*{10}{l}}
 & $i$ & $\lambda^{(i)}$ & $\|\rho_{i-1}\|_2$ & $|w_1|$ & $\|{\wt}\|_2$ & $\|Z\|_2\|w\|_2$ &  $\|Z\|_2 \|\tilde w\|_2$ & $\tfrac{\|Z\|_2\|\tilde w\|_2}{\tau}$
&it\\
\hline 
\multirow{8}{*}{\rotatebox{90}{\mbox{Unprec.}}} & 1 & 0.099735 & 0.0033 & 1.1 & 1.4 & 5.1 & -- & -- & 54\\
 & 2 & -0.0028427 & 0.00095 & 0.9 & 0.16 & 2.6 & 3.7 & 1.1e+05 & 61\\
 & 3 & -0.0059742 & 0.00027 &   1 & 0.048 & 2.9 & 0.65 & 6.9e+04 & 51\\
 & 4 & -0.0049326 & 8e-05 & 0.99 & 0.012 & 2.8 & 0.16 & 6.1e+04 & 41\\
 & 5 & -0.0052396 & 2.2e-05 &   1 & 0.0035 & 2.8 & 0.045 & 5.7e+04 & 34\\
 & 6 & -0.0051539 & 6.1e-06 &   1 & 0.00094 & 2.8 & 0.012 & 5.6e+04 & 32\\
 & 7 & -0.0051775 & 1.7e-06 &   1 & 0.00026 & 2.8 & 0.0034 & 5.5e+04 & 28\\
 & 8 & -0.005171 & 4.6e-07 &   1 & 7e-05 & 2.8 & 0.00092 & 5.5e+04 & 25\\
\hline
\multirow{8}{*}{\rotatebox{90}{\mbox{Untuned}}} & 1 & 0.099735 & 0.0033 &  63 & 1.4e+02 & 8.1e+02 & -- & -- & 8\\
 & 2 & -0.0028504 & 0.00095 &   3 & 4.4 &  28 &   5 & 1.5e+05 & 11\\
 & 3 & -0.0059693 & 0.00027 & 2.7 & 4.1 &  26 & 5.6 & 5.9e+05 & 12\\
 & 4 & -0.0049345 & 7.9e-05 & 2.5 &   4 &  24 & 5.4 & 2e+06 & 13\\
 & 5 & -0.0052389 & 2.2e-05 & 2.5 &   4 &  25 & 5.5 & 6.9e+06 & 14\\
 & 6 & -0.0051541 & 6e-06 & 2.5 &   4 &  25 & 5.5 & 2.5e+07 & 15\\
 & 7 & -0.0051775 & 1.6e-06 & 2.5 &   4 &  25 & 5.5 & 9.1e+07 & 16\\
 & 8 & -0.0051711 & 4.5e-07 & 2.5 &   4 &  25 & 5.5 & 3.3e+08 & 17\\
\hline
\multirow{8}{*}{\rotatebox{90}{\mbox{Poly. prec.}}} & 1 & 0.099735 & 0.0033 & 1.1 & 1.4 & 5.1 & -- & -- & 9\\
 & 2 & -0.0028791 & 0.00094 & 0.91 & 0.16 & 2.6 & 3.6 & 1.1e+05 & 9\\
 & 3 & -0.0059578 & 0.00027 &   1 & 0.047 & 2.9 & 0.64 & 6.8e+04 & 6\\
 & 4 & -0.0049379 & 7.8e-05 & 0.99 & 0.012 & 2.8 & 0.16 & 6e+04 & 5\\
 & 5 & -0.005238 & 2.2e-05 &   1 & 0.0034 & 2.8 & 0.044 & 5.7e+04 & 4\\
 & 6 & -0.0051543 & 5.9e-06 &   1 & 0.00092 & 2.8 & 0.012 & 5.6e+04 & 4\\
 & 7 & -0.0051774 & 1.6e-06 &   1 & 0.00025 & 2.8 & 0.0033 & 5.5e+04 & 4\\
 & 8 & -0.0051711 & 4.5e-07 &   1 & 6.9e-05 & 2.8 & 0.0009 & 5.5e+04 & 3\\

\end{tabular}
\end{table}

\begin{table}
\scriptsize
\setlength{\tabcolsep}{0.25em}
\centering
\caption{\texttt{cdde1} with tuned preconditioning ($Py_i = Iy_i$ and $Py_i = Ay_i$): inverse iteration residual $\|\rho_{i-1}\|_2$, eigenvector matrix norm
$\|Z\|_2$ and condition number $\kappa_2(Z)$, sizes of components of $w$, constants in~\eqref{eq:2.5} and~\eqref{eq:2.6} and 
GMRES iteration numbers at each outer iteration $i$. The ILU drop tolerance is $10^{-2}$.}
\label{t:cdde:T}
\begin{tabular}{*{12}{l}}
 &$i$&$\lambda^{(i)}$&$\|\rho_{i-1}\|_2$&$\|Z\|_2$&$\kappa_2(Z)$&$|w_1|$&$\|{\wt}\|_2$&
 {\smaller $\|Z\|_2\|w\|_2$}
 &
 {\smaller $\|Z\|_2\|\tilde w\|_2$}
 &$\tfrac{\|Z\|_2\|\tilde w\|_2}{\tau}$&it\\
\hline 
\multirow{8}{*}{$I$} & 1 & 0.099735 & 0.0033 & 4.9 & 1.9e+04 &  17 &  60 & 3.1e+02 & -- & -- & 9\\
 & 2 & -0.00285 & 0.00095 & 5.2 & 2.3e+04 & 0.98 & 0.73 & 6.3 & 5.4e+02 & 1.7e+07 & 11\\
 & 3 & -0.005968 & 0.00027 & 5.2 & 2.5e+04 &   1 & 0.012 & 5.2 & 7.4 & 7.8e+05 & 9\\
 & 4 & -0.0049348 & 7.9e-05 & 5.2 & 2.4e+04 &   1 & 0.0028 & 5.2 & 1.4 & 5.2e+05 & 8\\
 & 5 & -0.0052389 & 2.2e-05 & 5.2 & 2.4e+04 &   1 & 0.00076 & 5.2 & 0.4 & 5e+05 & 8\\
 & 6 & -0.0051541 & 6e-06 & 5.2 & 2.4e+04 &   1 & 0.00021 & 5.2 & 0.11 & 5e+05 & 8\\
 & 7 & -0.0051775 & 1.6e-06 & 5.2 & 2.4e+04 &   1 & 5.8e-05 & 5.2 & 0.03 & 5e+05 & 8\\
 & 8 & -0.0051711 & 4.5e-07 & 5.2 & 2.4e+04 &   1 & 1.6e-05 & 5.2 & 0.0083 & 5e+05 & 8\\
\hline
\multirow{8}{*}{$A$} & 1 & 0.099735 & 0.0033 &   5 & 2.5e+04 &  15 &  46 & 2.4e+02 & -- & -- & 8\\
 & 2 & -0.0028498 & 0.00095 & 5.2 & 2e+04 & 2.5 & 5.9 &  33 & 5.6 & 1.7e+05 & 10\\
 & 3 & -0.0059681 & 0.00027 & 5.2 & 2.5e+04 &   1 & 0.18 & 5.3 & 0.7 & 7.4e+04 & 8\\
 & 4 & -0.0049348 & 7.9e-05 & 5.2 & 2.4e+04 &   1 & 0.048 & 5.2 & 0.2 & 7.5e+04 & 7\\
 & 5 & -0.0052389 & 2.2e-05 & 5.2 & 2.4e+04 &   1 & 0.013 & 5.2 & 0.054 & 6.8e+04 & 7\\
 & 6 & -0.0051541 & 6e-06 & 5.2 & 2.4e+04 &   1 & 0.0035 & 5.2 & 0.015 & 6.8e+04 & 7\\
 & 7 & -0.0051775 & 1.6e-06 & 5.2 & 2.4e+04 &   1 & 0.00095 & 5.2 & 0.004 & 6.7e+04 & 7\\
 & 8 & -0.0051711 & 4.5e-07 & 5.2 & 2.4e+04 &   1 & 0.00026 & 5.2 & 0.0011 & 6.7e+04 & 7\\

\end{tabular}
\end{table}

\begin{figure}
\centering
\begin{subfigure}[b]{0.45\textwidth}
\includegraphics[width=\textwidth,trim = 1cm 6.5cm 1cm 6.5cm,clip=true]{./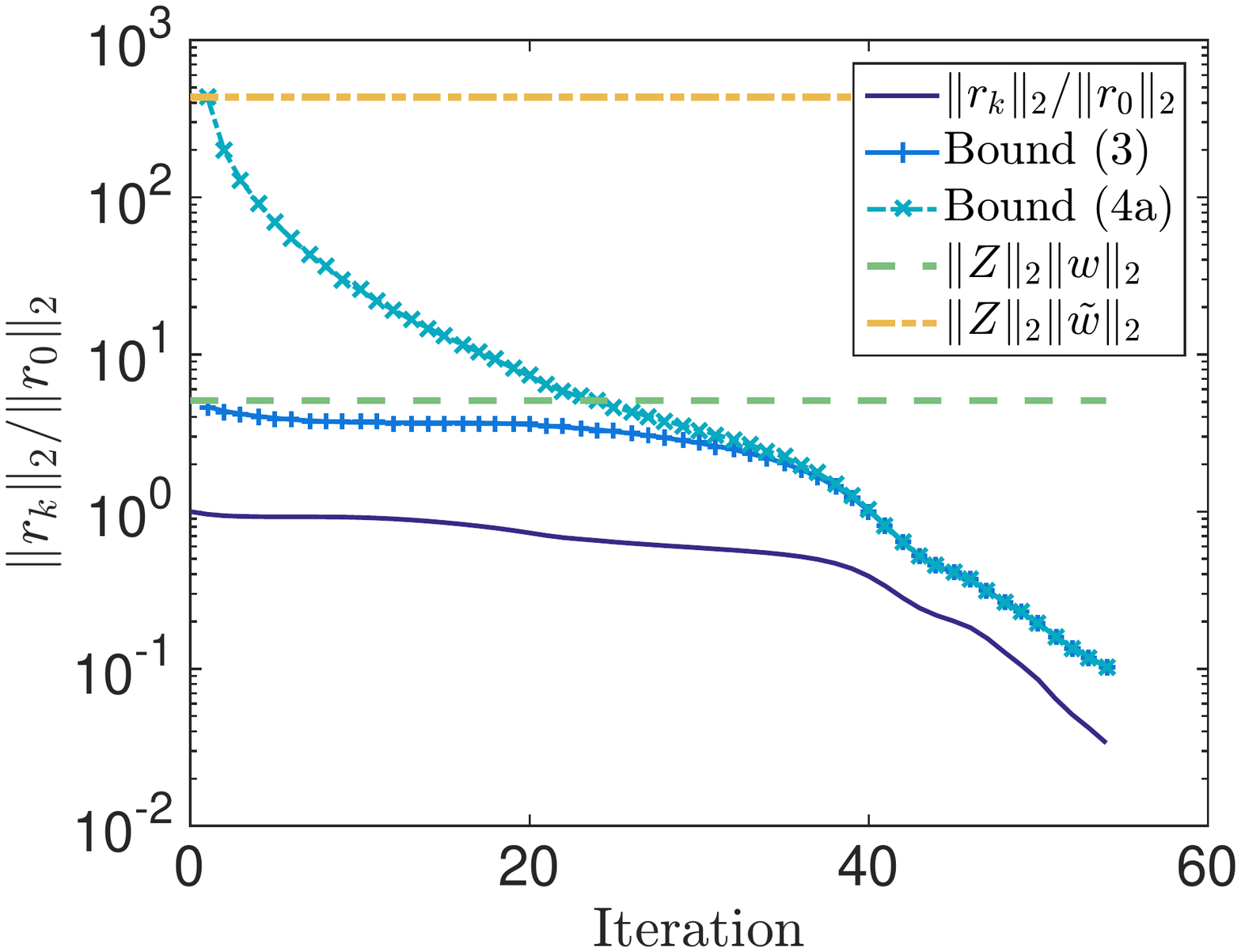}
\caption{No preconditioner, $k=1$}
\end{subfigure}
\begin{subfigure}[b]{0.45\textwidth}
\includegraphics[width=\textwidth,trim = 1cm 6.5cm 1cm 6.5cm,clip=true]{./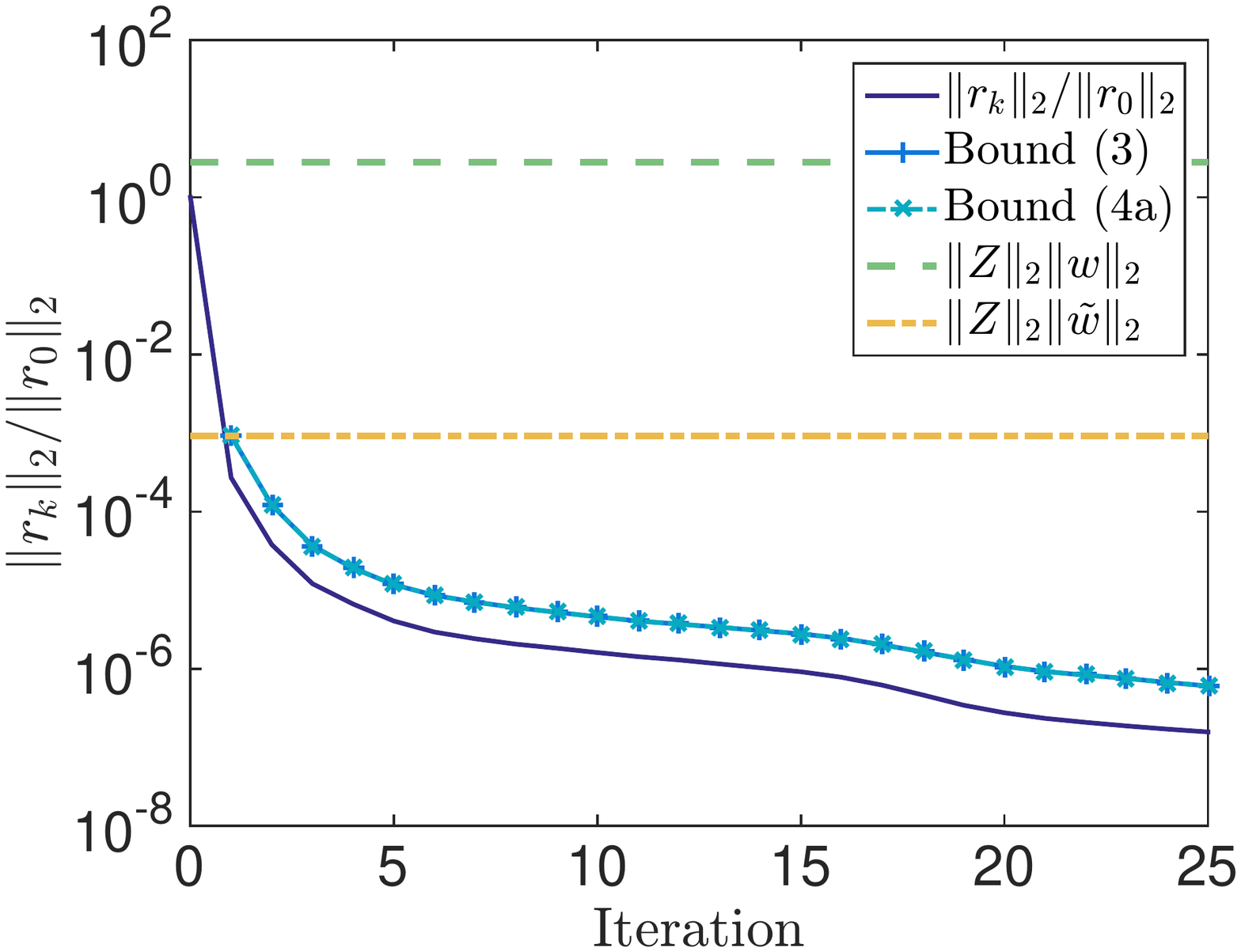}
\caption{No preconditioner, $k=8$}
\end{subfigure}

\begin{subfigure}[b]{0.45\textwidth}
\includegraphics[width=\textwidth,trim = 1cm 6.5cm 1cm 6.5cm,clip=true]{./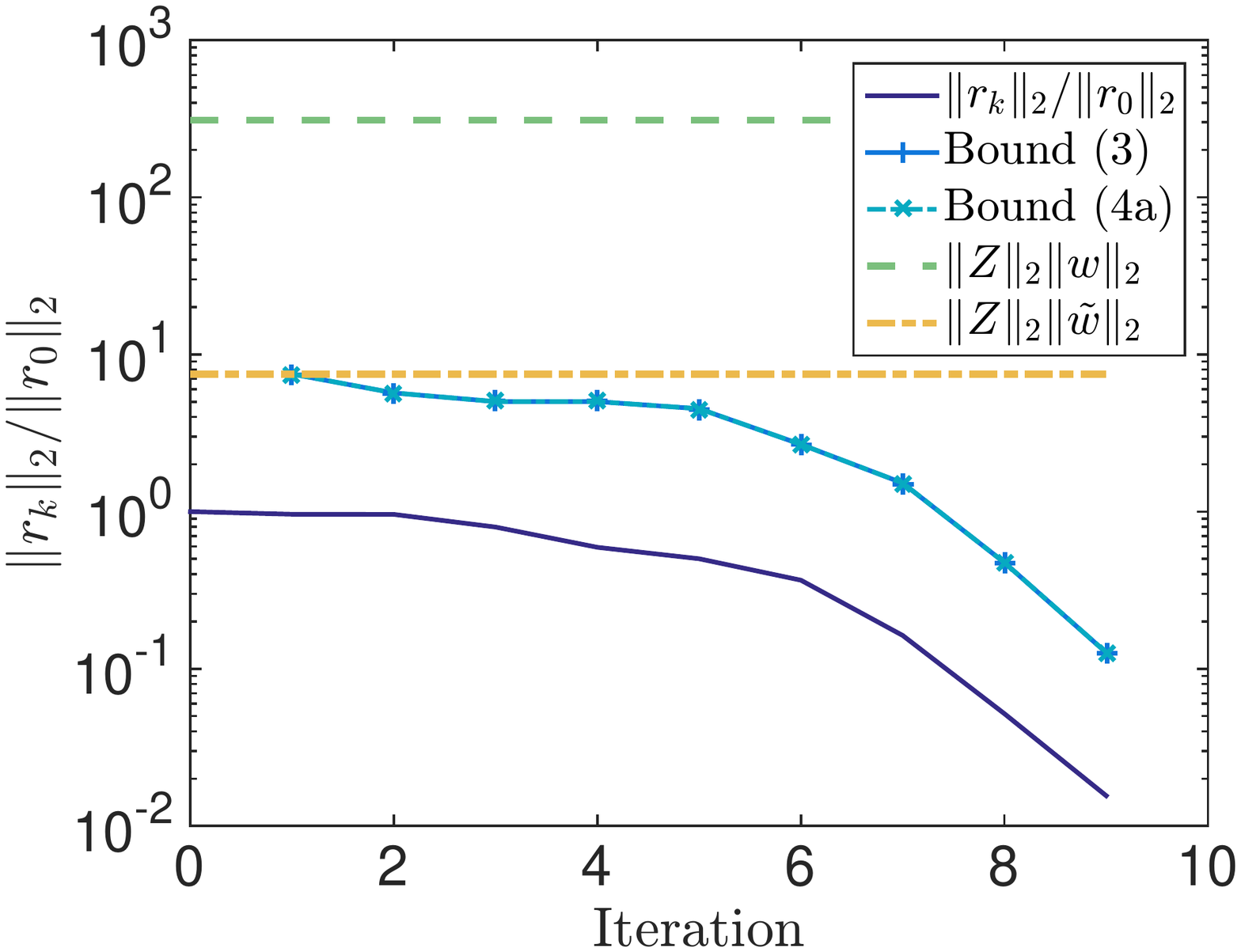}
\caption{Tuned ILU ($Py_i = y_i$), $k=1$}
\end{subfigure}
\begin{subfigure}[b]{0.45\textwidth}
\includegraphics[width=\textwidth,trim = 1cm 6.5cm 1cm 6.5cm,clip=true]{./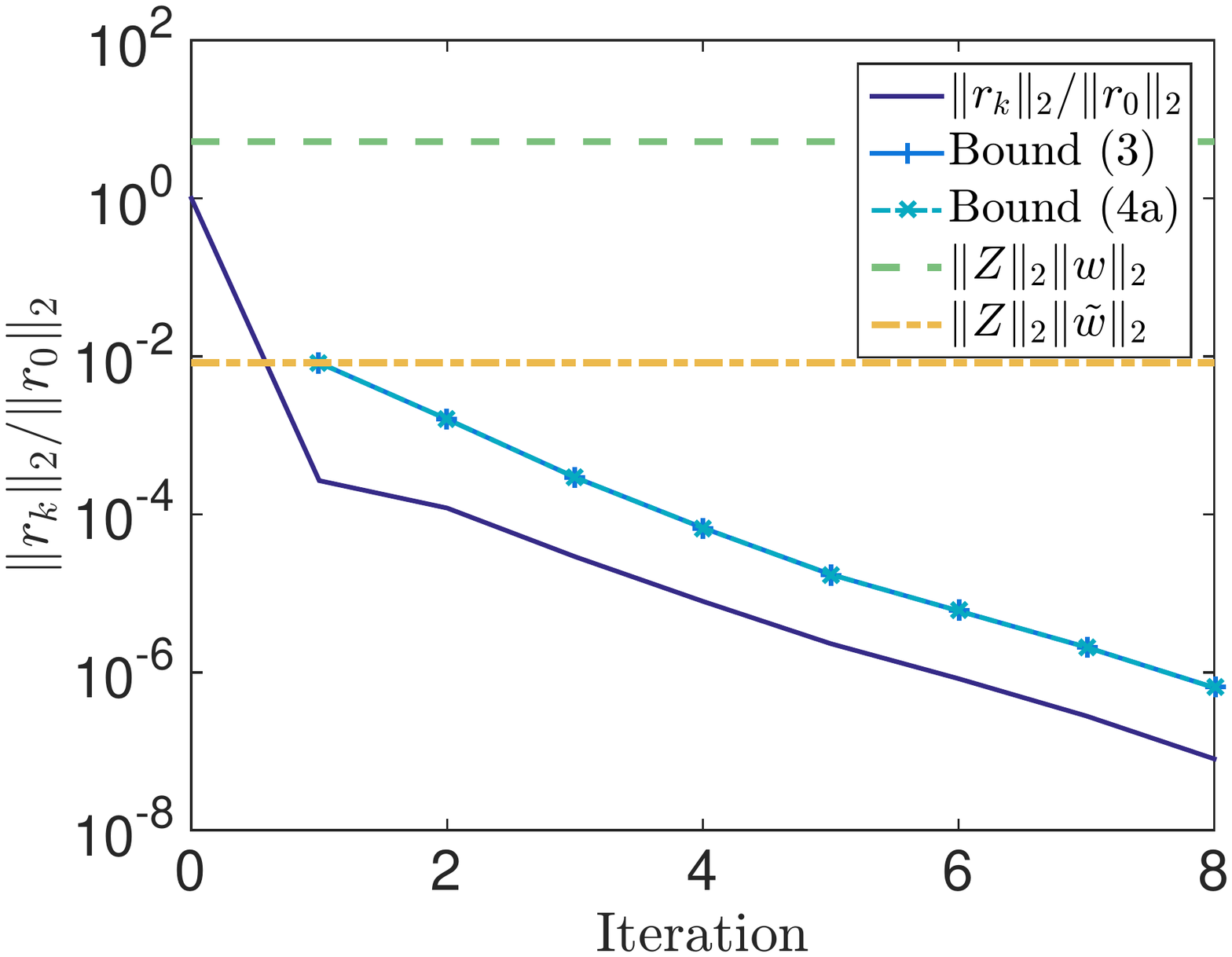}
\caption{Tuned ILU ($Py_i = y_i$), $k=8$}
\end{subfigure}

\begin{subfigure}[b]{0.45\textwidth}
\includegraphics[width=\textwidth,trim = 1cm 6.5cm 1cm 6.5cm,clip=true]{./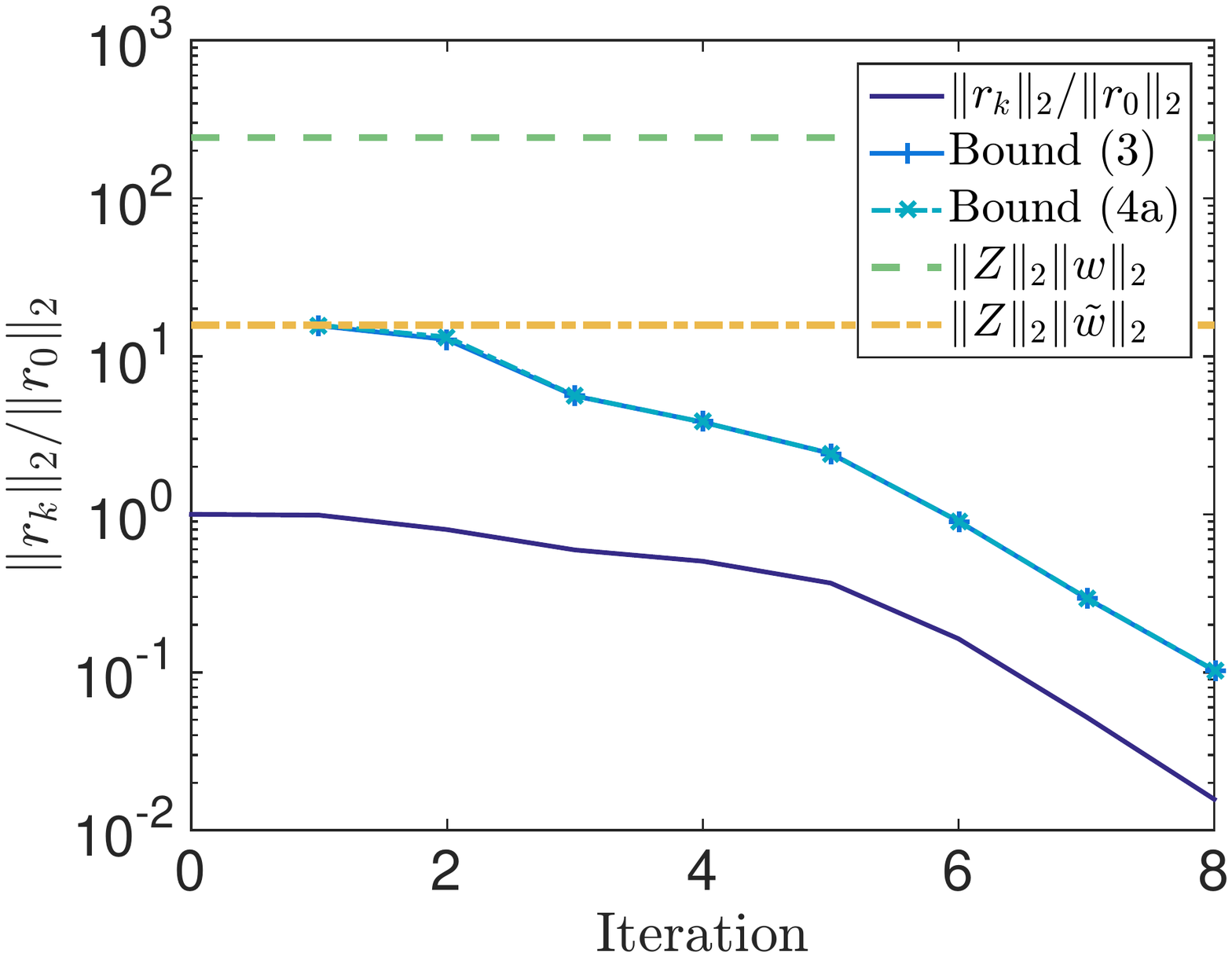}
\caption{Tuned ILU ($Py_i = Ay_i$), $k=1$}
\end{subfigure}
\begin{subfigure}[b]{0.45\textwidth}
\includegraphics[width=\textwidth,trim = 1cm 6.5cm 1cm 6.5cm,clip=true]{./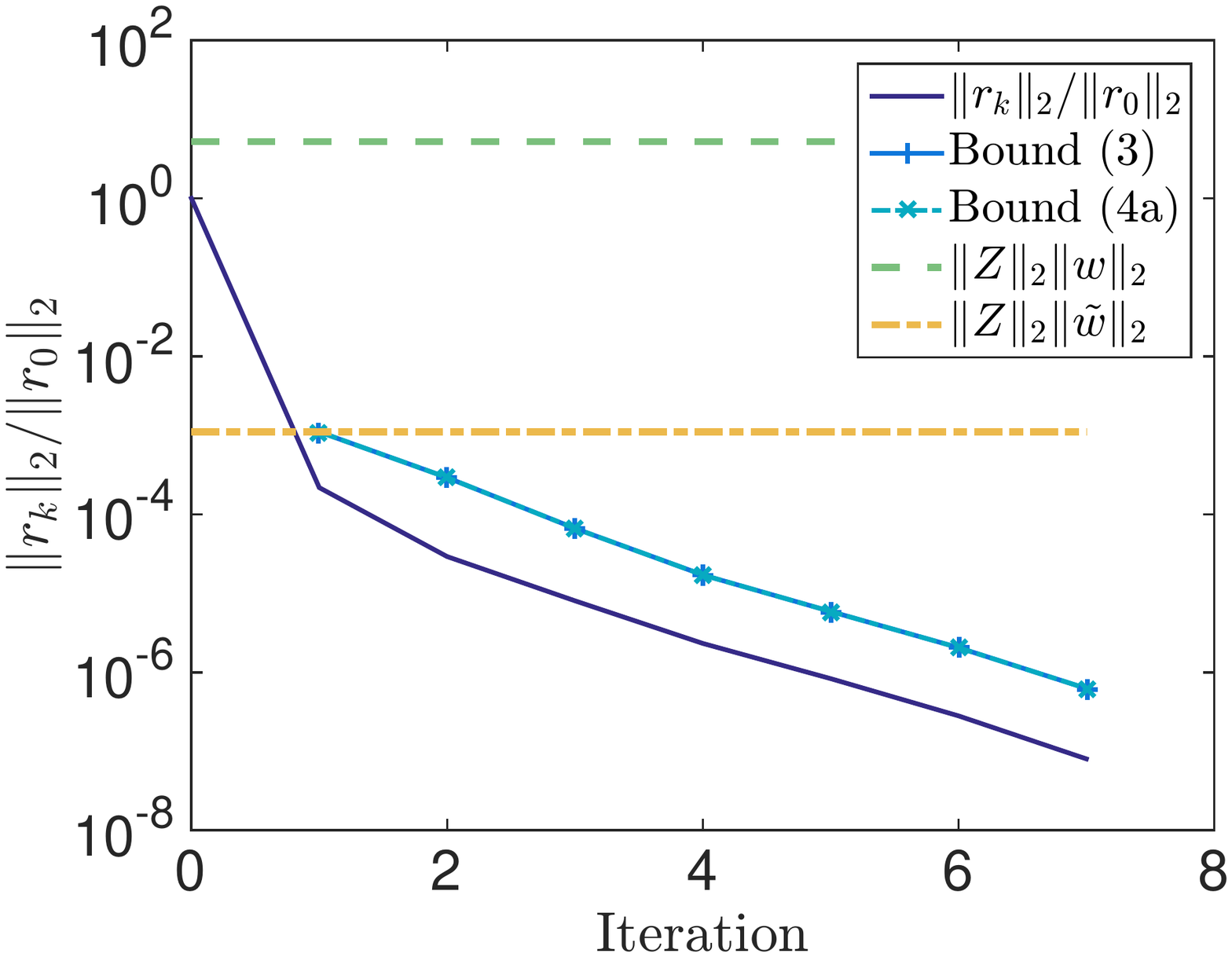}
\caption{Tuned ILU ($Py_i = Ay_i$), $k=8$}
\end{subfigure}

\begin{subfigure}[b]{0.45\textwidth}
\includegraphics[width=\textwidth,trim = 1cm 6.5cm 1cm 6.5cm,clip=true]{./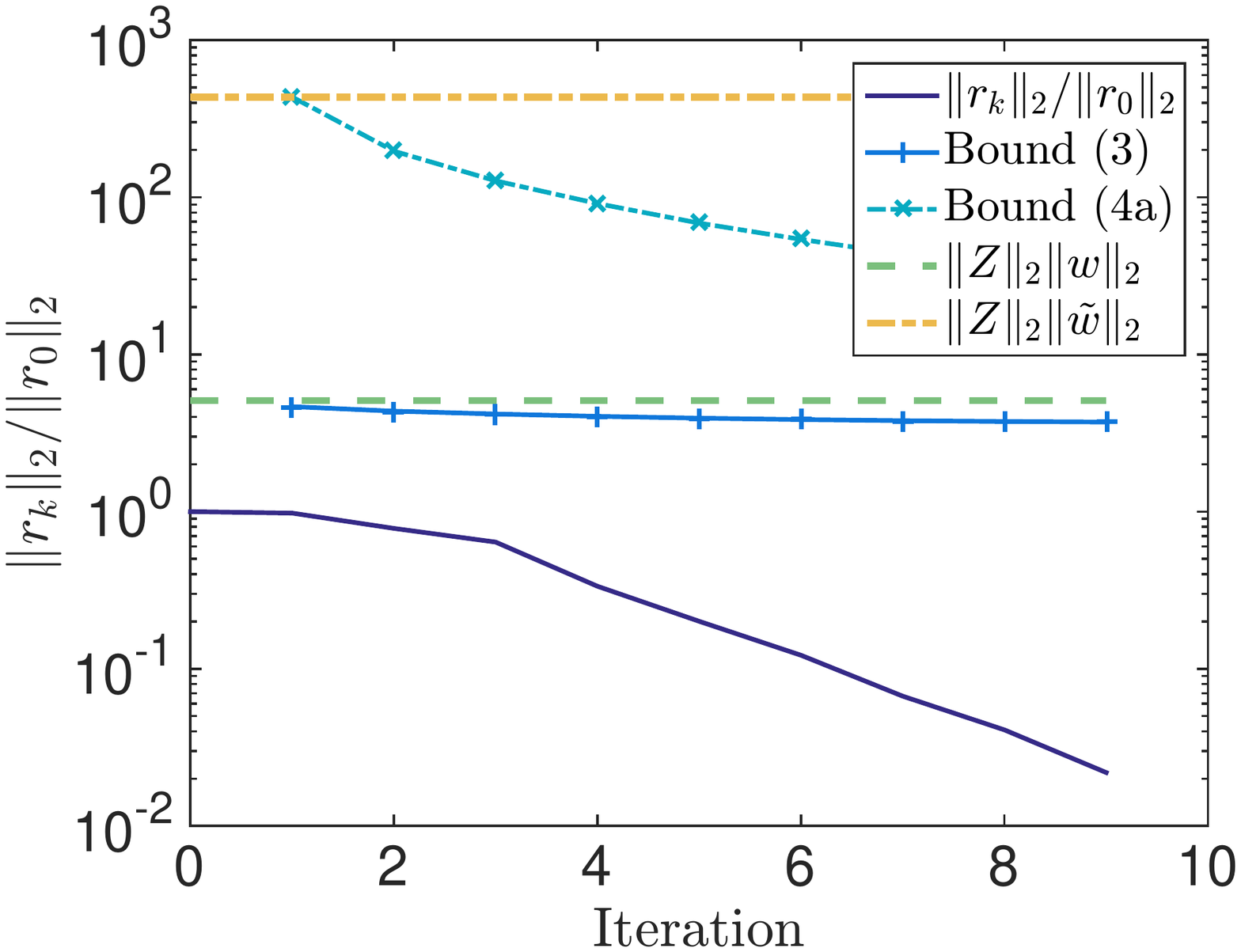}
\caption{Polynomial, $k=1$}
\end{subfigure}
\begin{subfigure}[b]{0.45\textwidth}
\includegraphics[width=\textwidth,trim = 1cm 6.5cm 1cm 6.5cm,clip=true]{./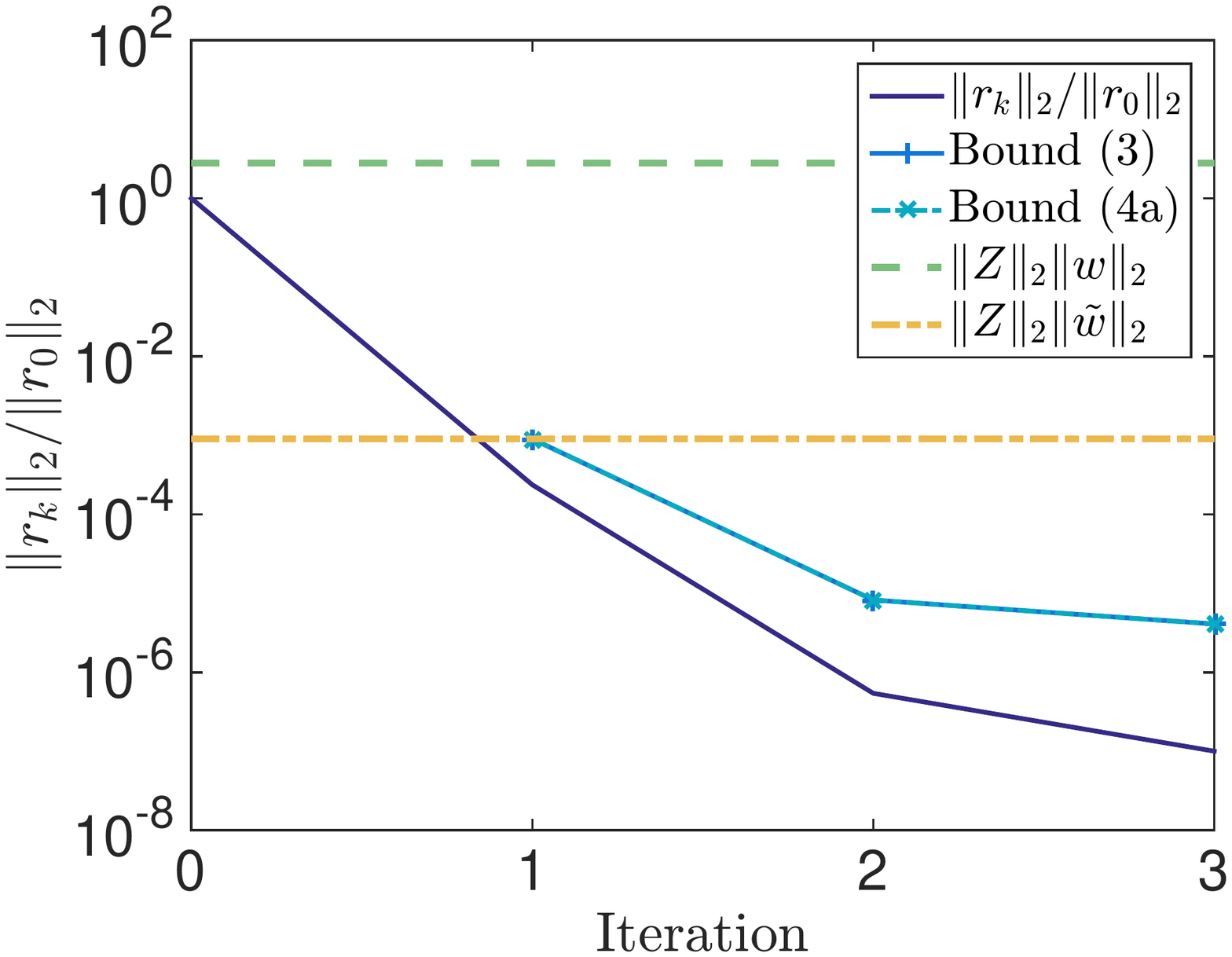}
\caption{Polynomial, $k=8$}
\end{subfigure}

\caption{Relative residual norms, bounds~\eqref{eq:2.5} and \eqref{eq:2.6a}, and the constants $\|Z\|_2\|w\|_2$ and $\|Z\|_2\|\tilde w\|_2$ for \texttt{cdde1}
for different preconditioners. The ILU drop tolerance is $10^{-2}$ and deg$(p)=10$.}
\label{f:cdde1_residuals}
\end{figure}

Obviously, the values of $\lambda^{(i)}$, $\|\rho_{i}\|_2$ at each outer iteration reveal that using different  preconditioners does not hamper  the convergence
of
the eigenpairs. It is also apparent that $w\rightarrow e_1$, as the outer iteration $i$ proceeds, in the unpreconditioned case (Table~\ref{t:cdde:U}). This
behavior is somehow destroyed
by the 
standard preconditioner (Table~\ref{t:cdde:U}) but mimicked by the two tuned preconditioners (Table~\ref{t:cdde:T}) and exactly
matched for the polynomial one (Table~\ref{t:cdde:U}). Using either tuned or polynomial preconditioners also has a positive effect on the constants in 
\eqref{eq:2.5} and~\eqref{eq:2.6}, which are larger for the standard preconditioner. Most importantly, the number of GMRES iteration steps is notably
reduced and remains at an approximately constant level for tuned and polynomial preconditioners. The increasing trend for the standard preconditioner is also
evident. 

A visual illustration of these observations is given in Figures~\ref{f:cdde1_residuals} and \ref{f:cdde:1e-2}. Figure~\ref{f:cdde1_residuals} shows the
behavior of the GMRES residuals for no preconditioner,  standard, tuned, and polynomial preconditioners. As discussed in Section~\ref{sec:initdecrease} we
observe the initial decrease of both the GMRES residual and the bound in \eqref{eq:2.6a}  (when no preconditioner or the tuned preconditioner is used), which is
particularly prominent in the later stages of the iteration.
Figure~\ref{f:cdde:1e-2} shows the history of components of $w$ as the outer iteration proceeds in
the two top plots.
The bottom plots show the required number of GMRES steps against the outer iteration (left plot) as well as the eigenvalue residual norm against the
cumulative sum of inner GMRES steps (right plot). The significant reduction of inner iterations by tuned and polynomial preconditioners is apparent.

\begin{figure}
    \centering
    \begin{subfigure}[b]{0.45\textwidth}
        \includegraphics[width=\textwidth,trim = 1cm 6.5cm 1cm 6.5cm,clip=true]{./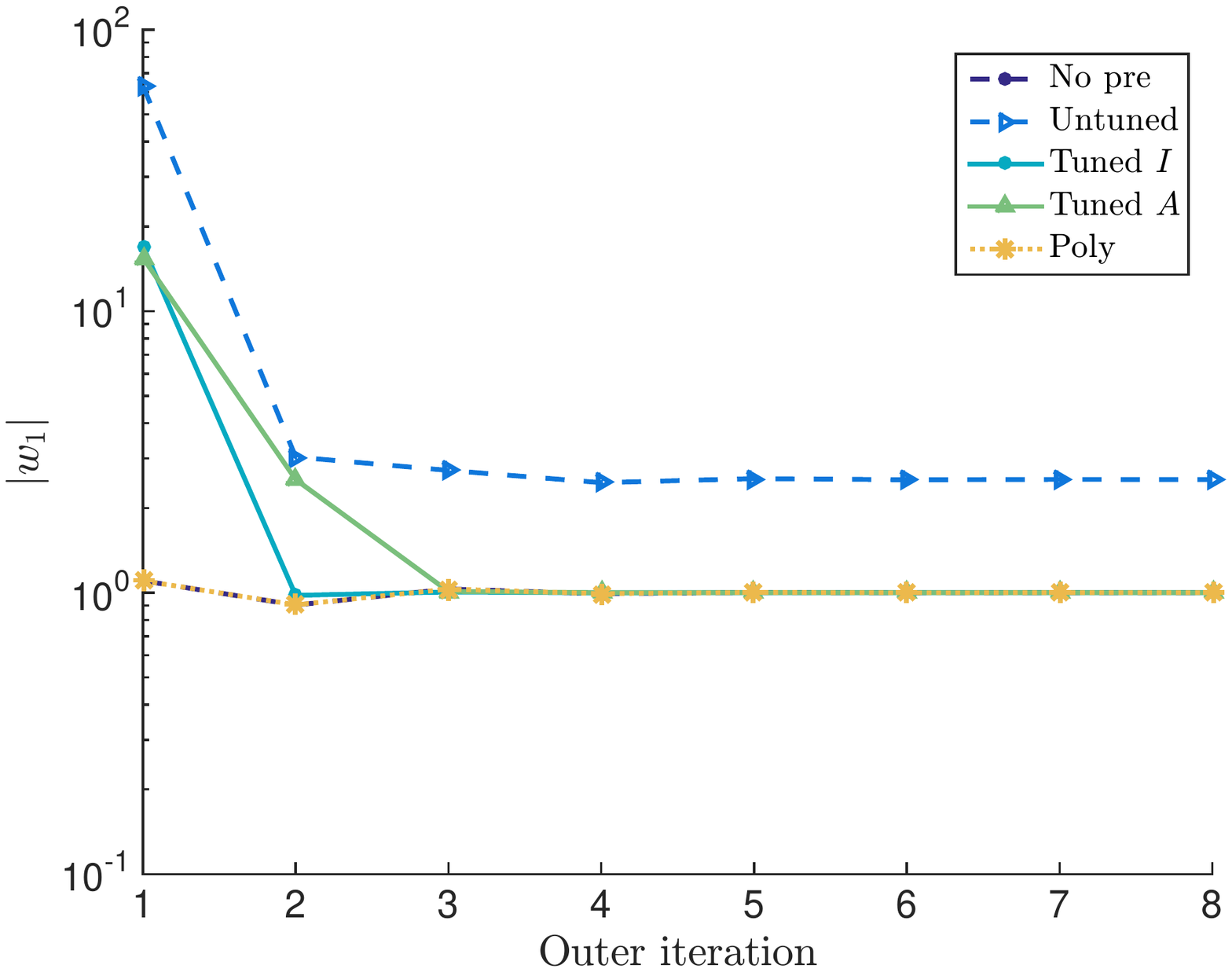}
    \end{subfigure}
    \begin{subfigure}[b]{0.45\textwidth}
        \includegraphics[width=\textwidth,trim = 1cm 6.5cm 1cm 6.5cm,clip=true]{./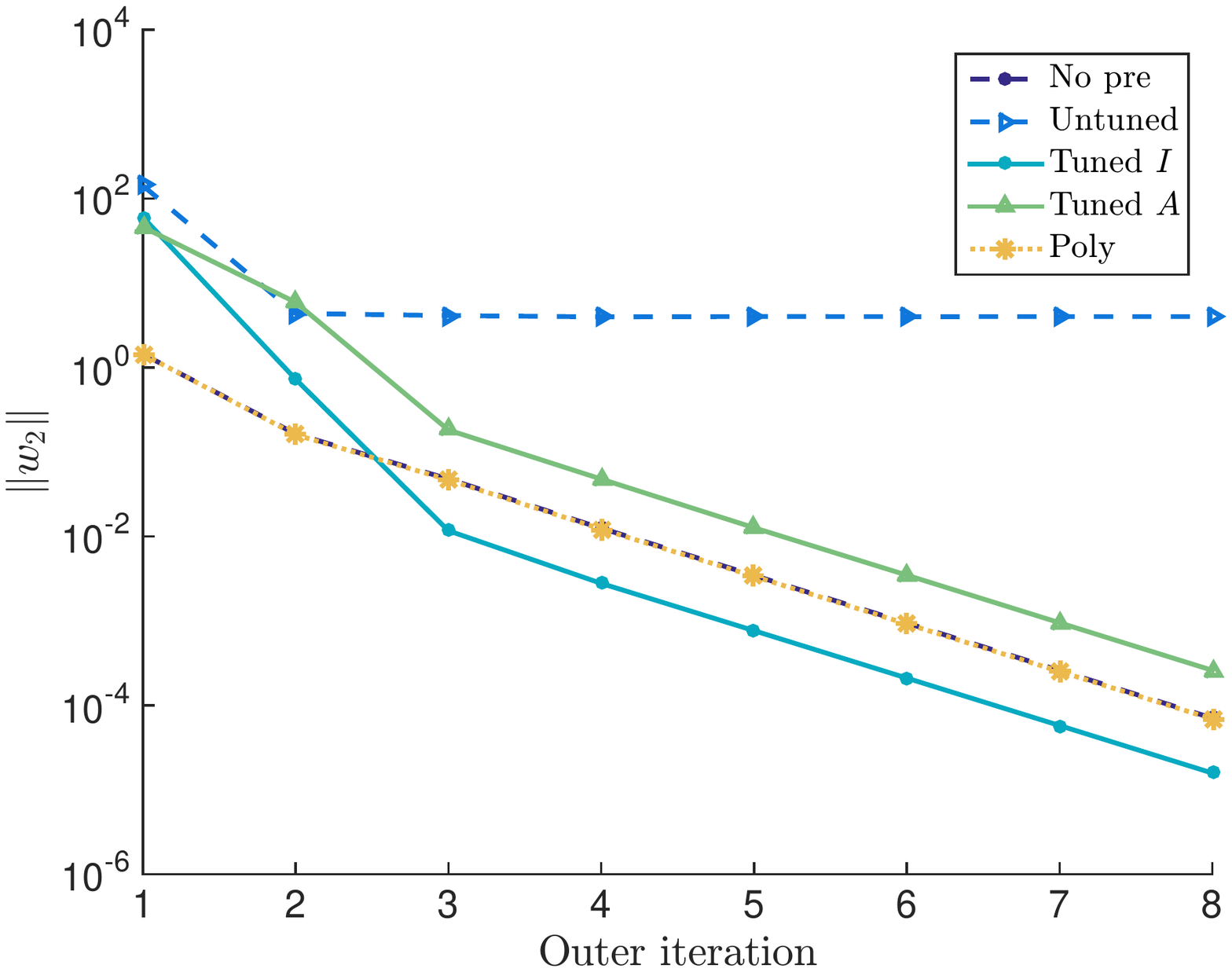}
    \end{subfigure}
    \begin{subfigure}[b]{0.45\textwidth}
        \includegraphics[width=\textwidth,trim = 1cm 6.5cm 1cm 6.5cm,clip=true]{./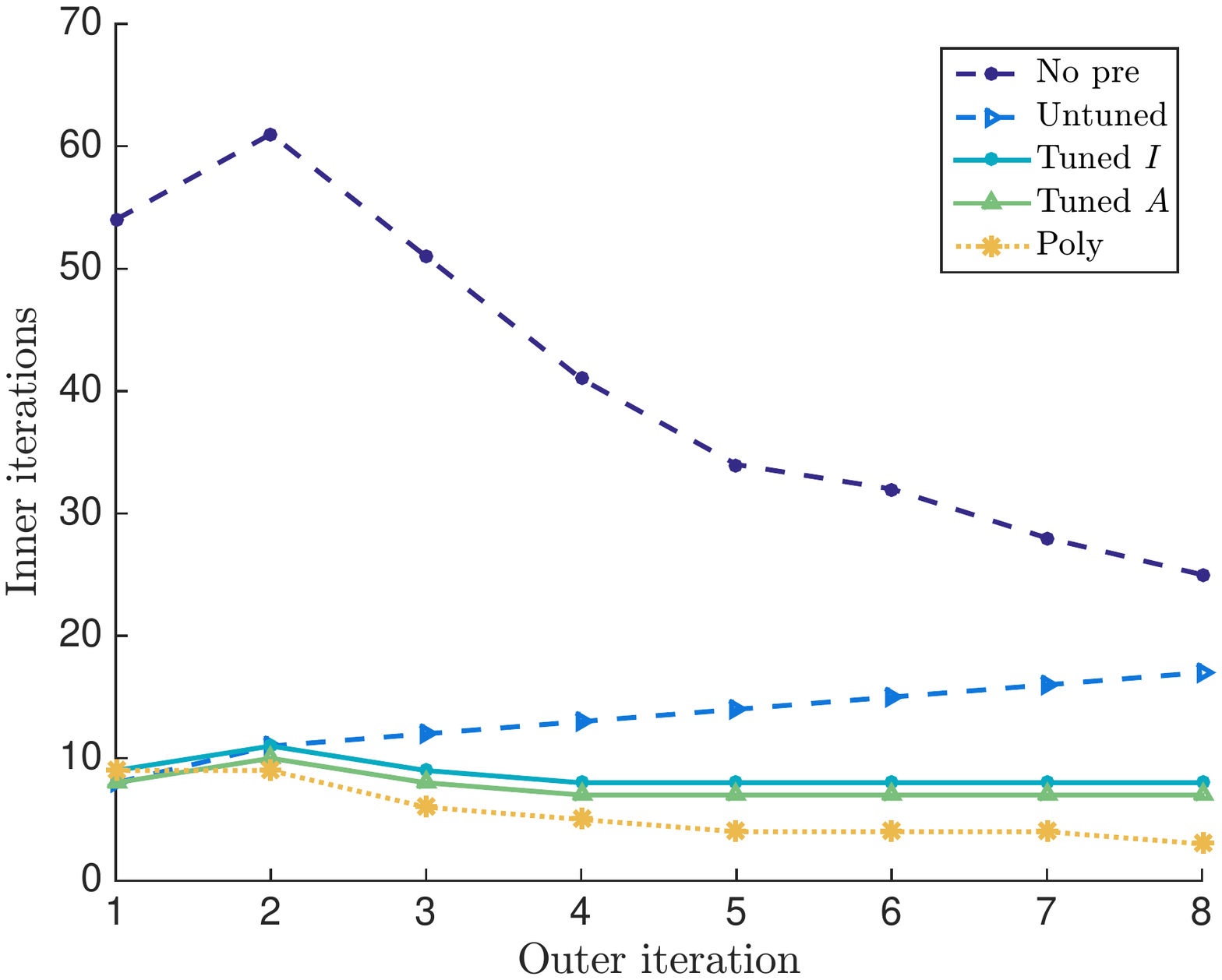}
    \end{subfigure}
        \begin{subfigure}[b]{0.45\textwidth}
        \includegraphics[width=\textwidth,trim = 1cm 6.5cm 1cm 6.5cm,clip=true]{./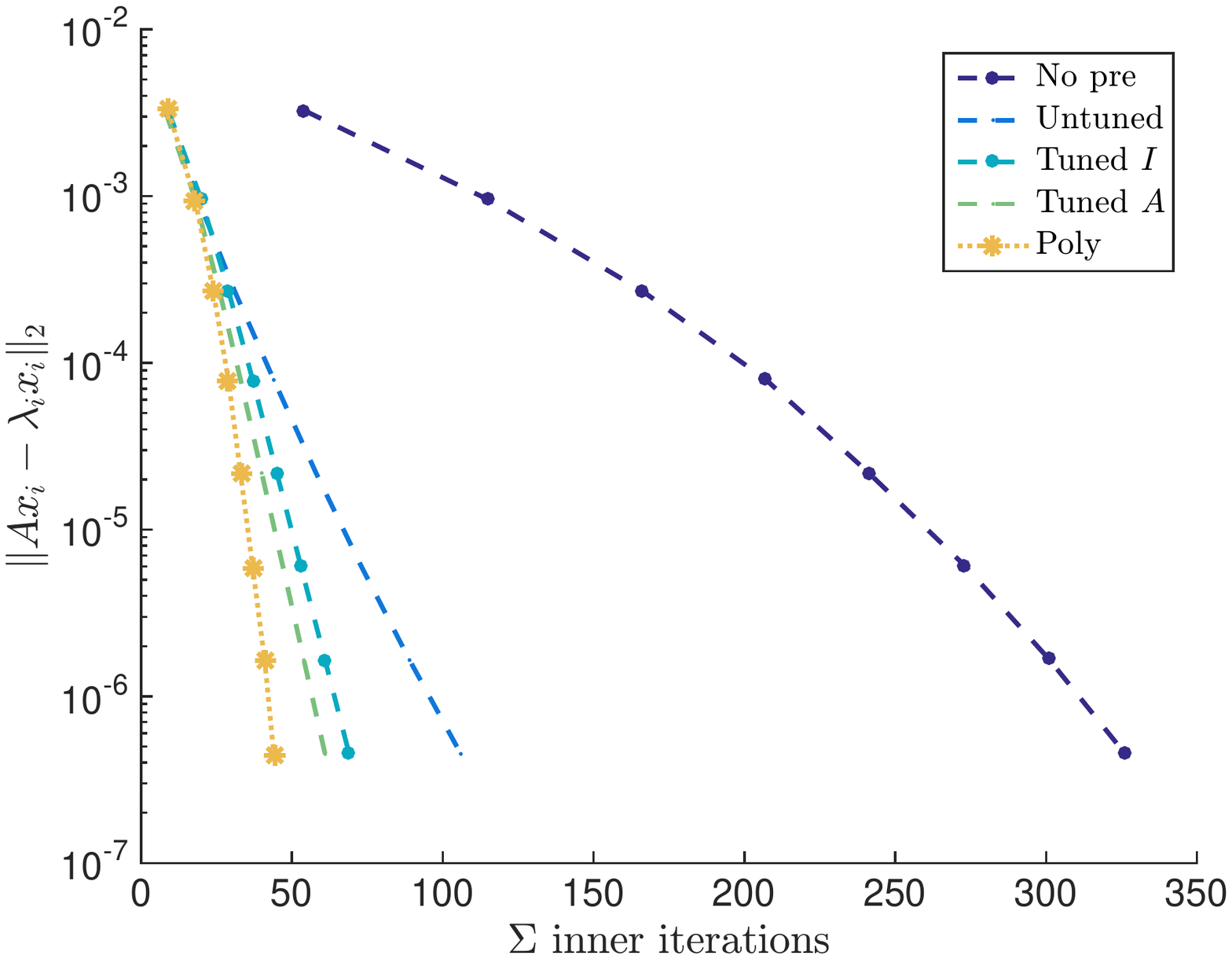}
    \end{subfigure}
    \caption{\texttt{cdde1}: $|w_1|$, $\|w_2\|$, number of GMRES iterations and outer residual. The ILU drop tolerance is $10^{-2}$ and deg$(p)=10$.}
    \label{f:cdde:1e-2}
\end{figure}

Figures~\ref{f:cdfd:1e-2} and \ref{f:olm:1e-2} show similar plots for the matrices \cdfd{}~and \olm{} using the same preconditioning settings.
For \cdfd{}, using a standard preconditioner results in a drastic increase in the magnitude of the components of the weight vector $w$ (top plots of
Figure~\ref{f:cdfd:1e-2}). The beneficial effects of tuned and polynomial preconditioners are similar to the previous examples.
For the matrix \olm{}, the tuned preconditioners lead to a slight increase of the outer iteration steps (14 compared to 11 for the other choices).
However, the amount of work in terms of the number of required inner iteration steps is still smaller than for the other variants (bottom plots of
Figure~\ref{f:olm:1e-2}). The polynomial preconditioner seems to be of lesser quality compared to other preconditioners for the matrix \olm{}, leading to more
inner iterations. Increasing the polynomial degree did not lead to improvements. It seems that for this example, the basic strategies mentioned in
Section~\ref{ssec:polyprec} to select the coefficients of the preconditioning polynomial are not sufficient. For these cases, this highlights an advantage of
tuned preconditioners over polynomial preconditioning, especially for tuned preconditioners that are built from standard preconditioning approaches, e.g.
incomplete factorizations, which can be constructed in a much more automatic and
straightforward manner. 

\begin{figure}
    \centering
    \begin{subfigure}[b]{0.45\textwidth}
        \includegraphics[width=\textwidth,trim = 1cm 6.5cm 1cm 6.5cm,clip=true]{./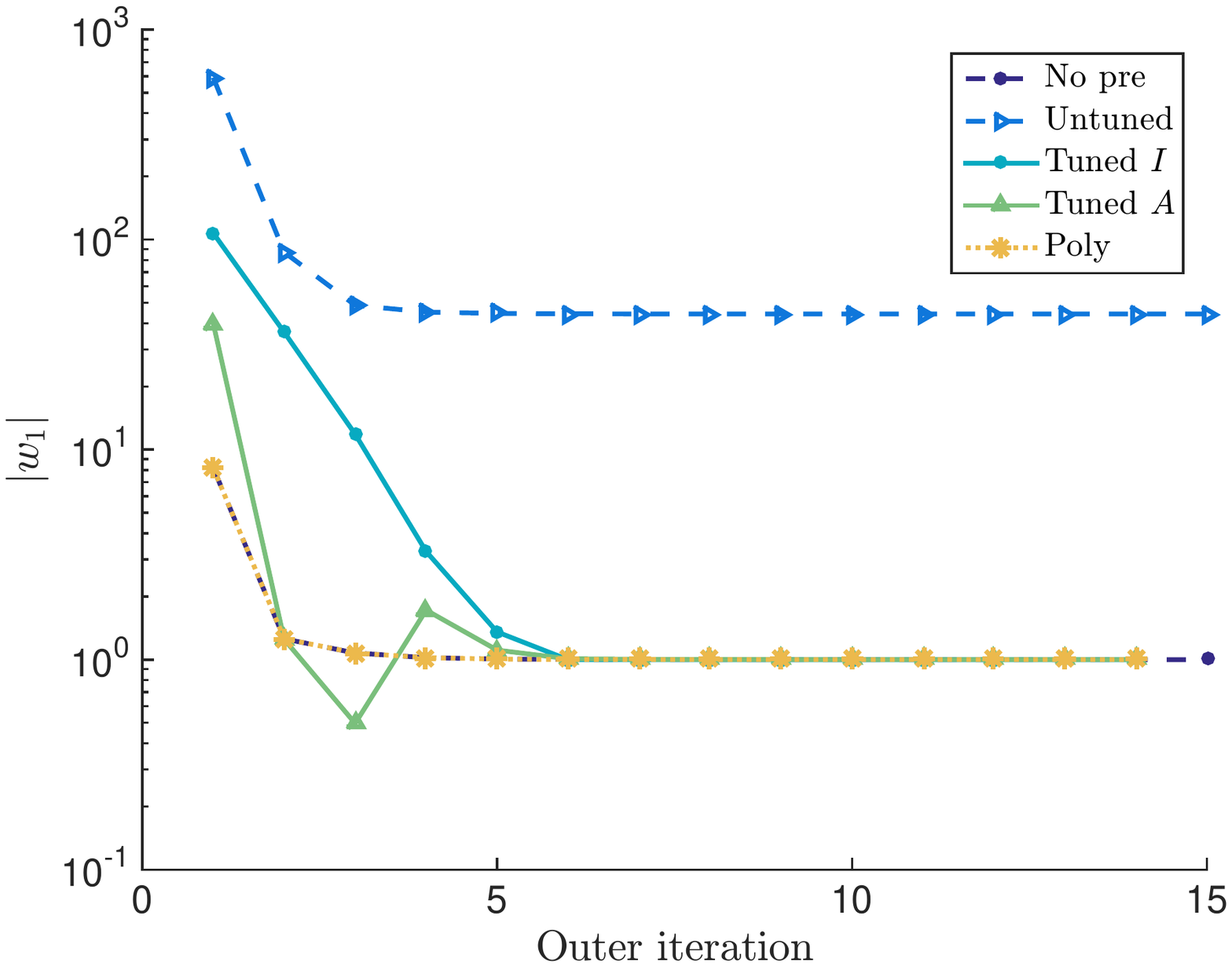}
    \end{subfigure}
    \begin{subfigure}[b]{0.45\textwidth}
        \includegraphics[width=\textwidth,trim = 1cm 6.5cm 1cm 6.5cm,clip=true]{./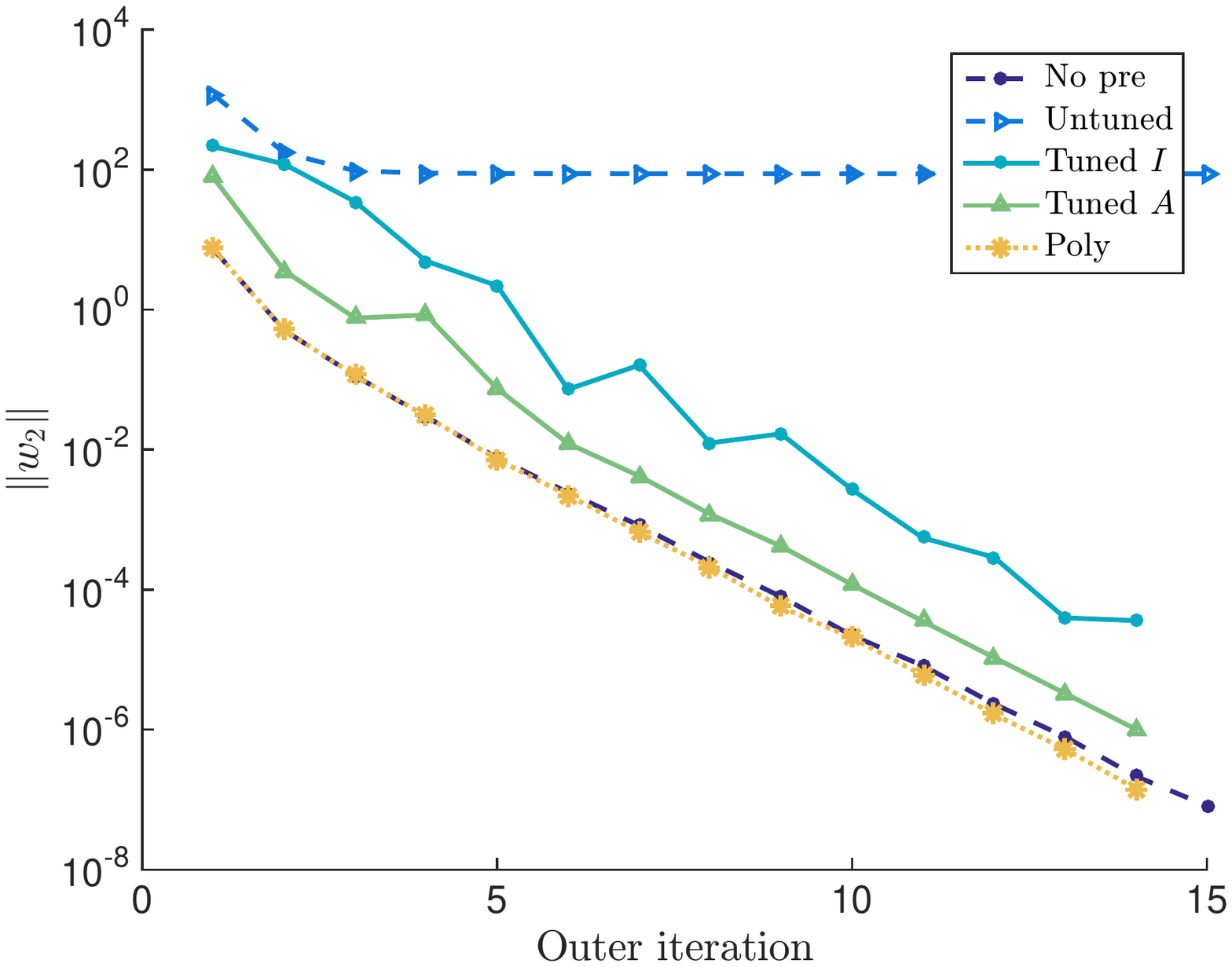}
    \end{subfigure}
    \begin{subfigure}[b]{0.45\textwidth}
        \includegraphics[width=\textwidth,trim = 1cm 6.5cm 1cm 6.5cm,clip=true]{./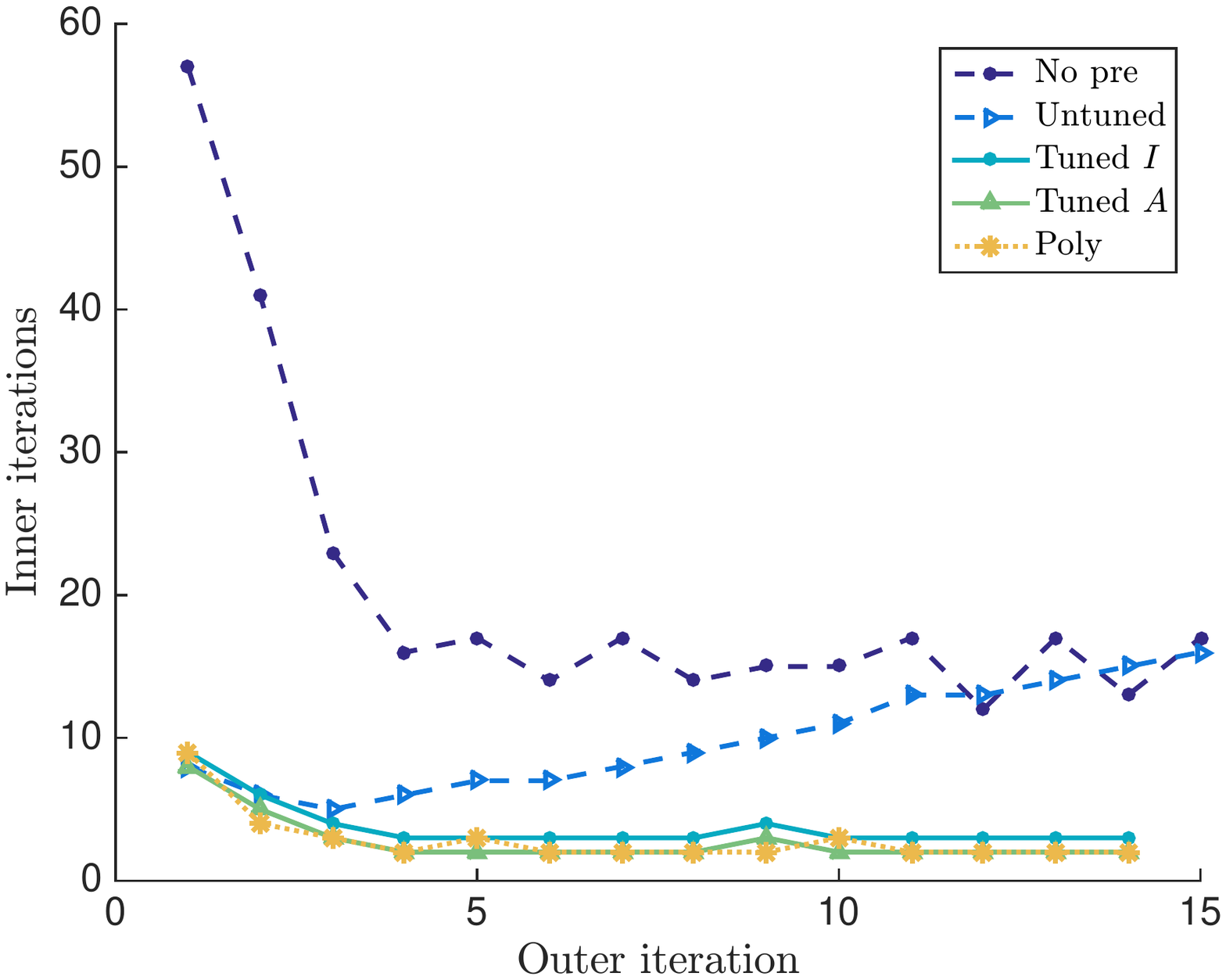}
    \end{subfigure}
        \begin{subfigure}[b]{0.45\textwidth}
        \includegraphics[width=\textwidth,trim = 1cm 6.5cm 1cm 6.5cm,clip=true]{./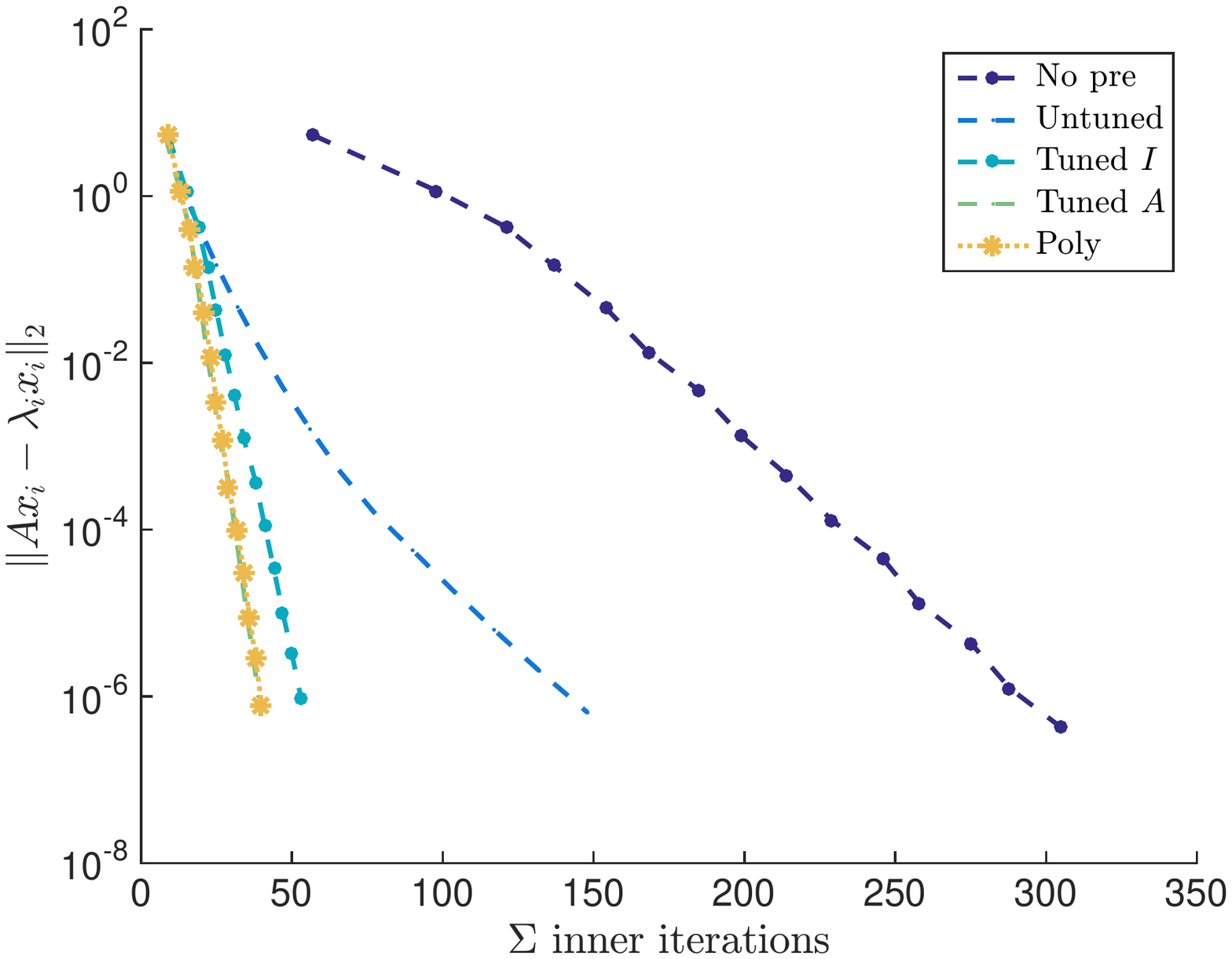}
    \end{subfigure}
    \caption{\texttt{cd\_fd}: $|w_1|$, $\|w_2\|$, number of GMRES iterations and eigenvalue residual. The ILU drop tolerance is $10^{-2}$ and deg$(p)=10$.}
     \label{f:cdfd:1e-2}
\end{figure}

\begin{figure}
    \centering
    \begin{subfigure}[b]{0.45\textwidth}
        \includegraphics[width=\textwidth,trim = 1cm 6.5cm 1cm 6.5cm,clip=true]{./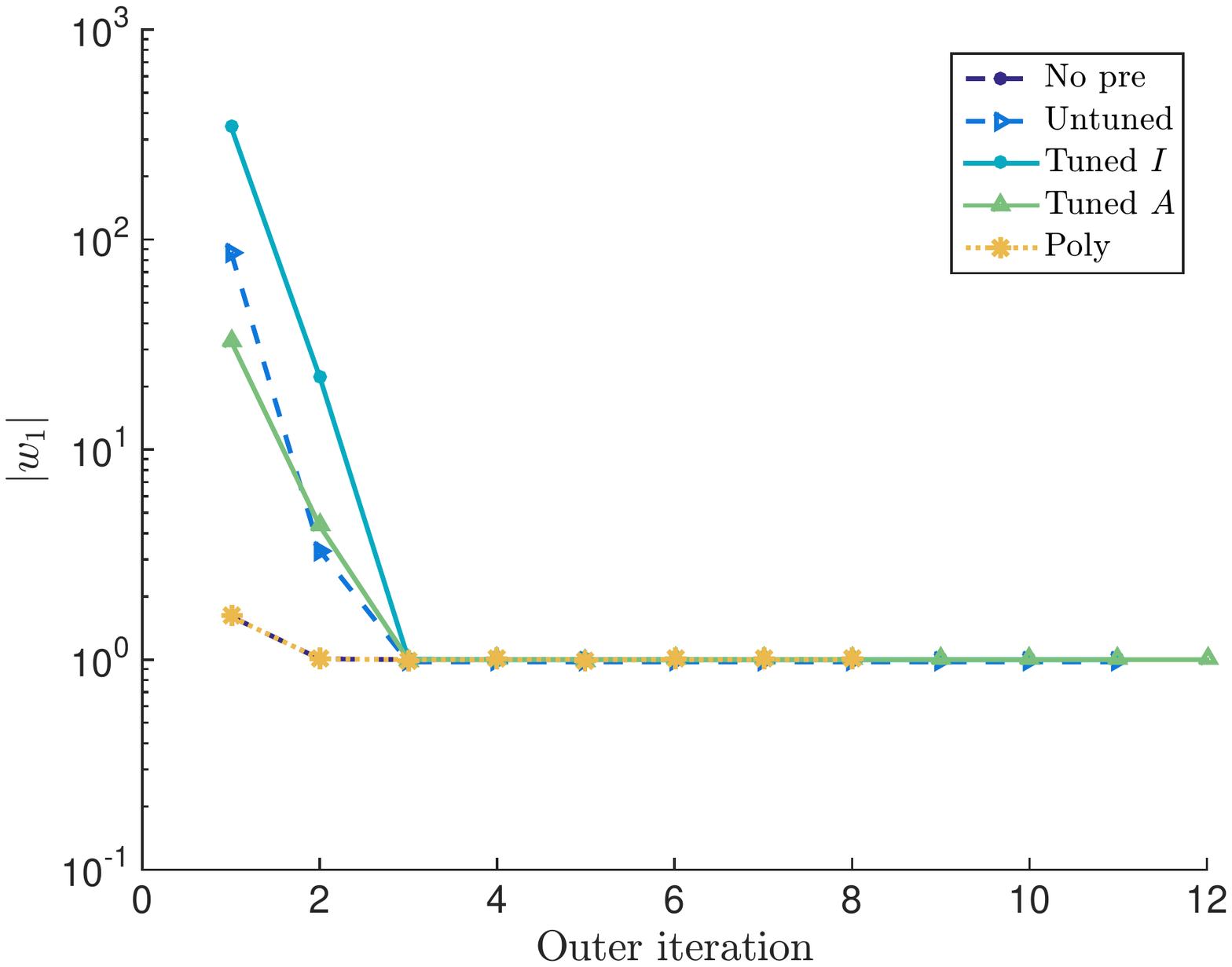}
    \end{subfigure}
    \begin{subfigure}[b]{0.45\textwidth}
        \includegraphics[width=\textwidth,trim = 1cm 6.5cm 1cm 6.5cm,clip=true]{./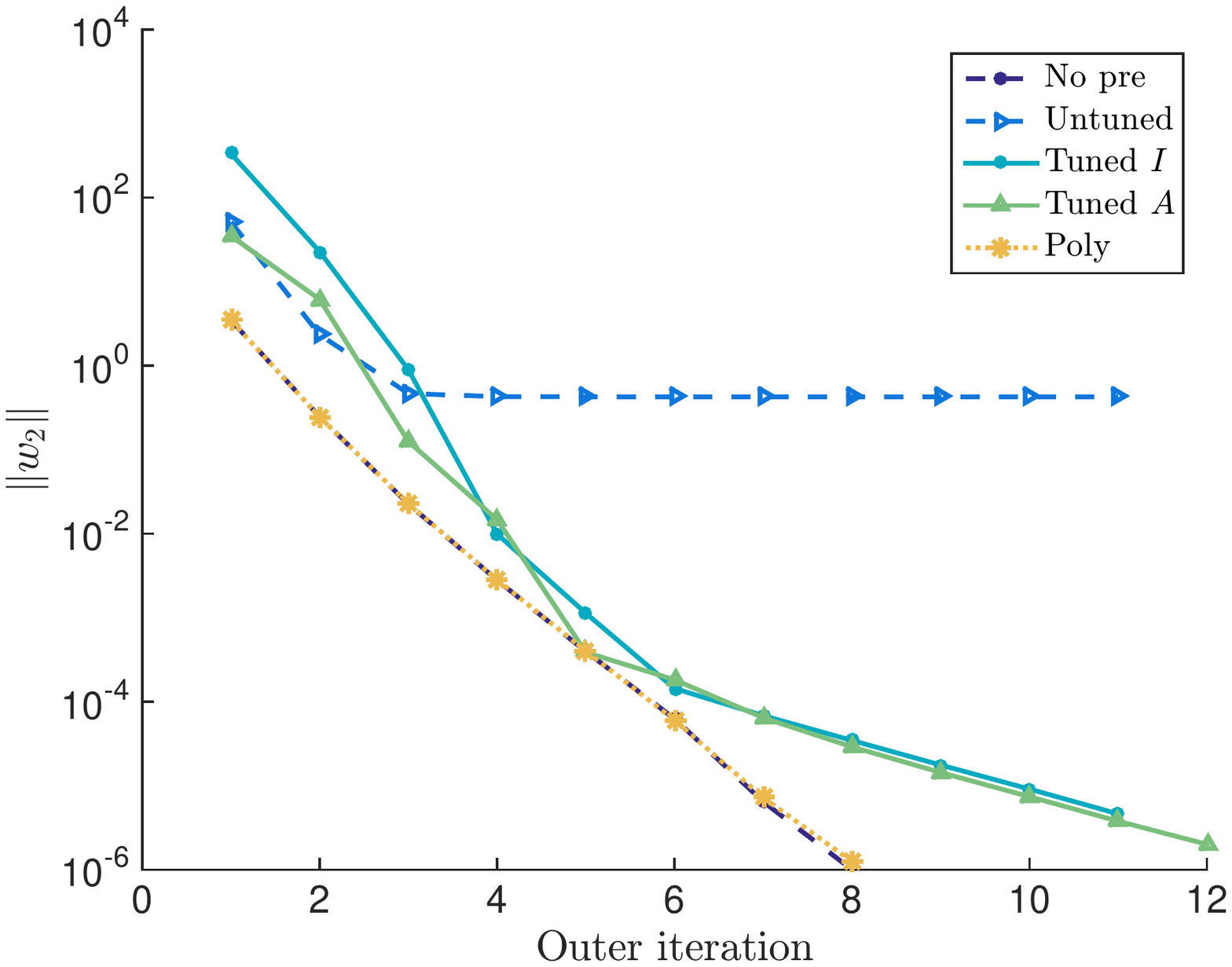}
    \end{subfigure}
    \begin{subfigure}[b]{0.45\textwidth}
        \includegraphics[width=\textwidth,trim = 1cm 6.5cm 1cm 6.5cm,clip=true]{./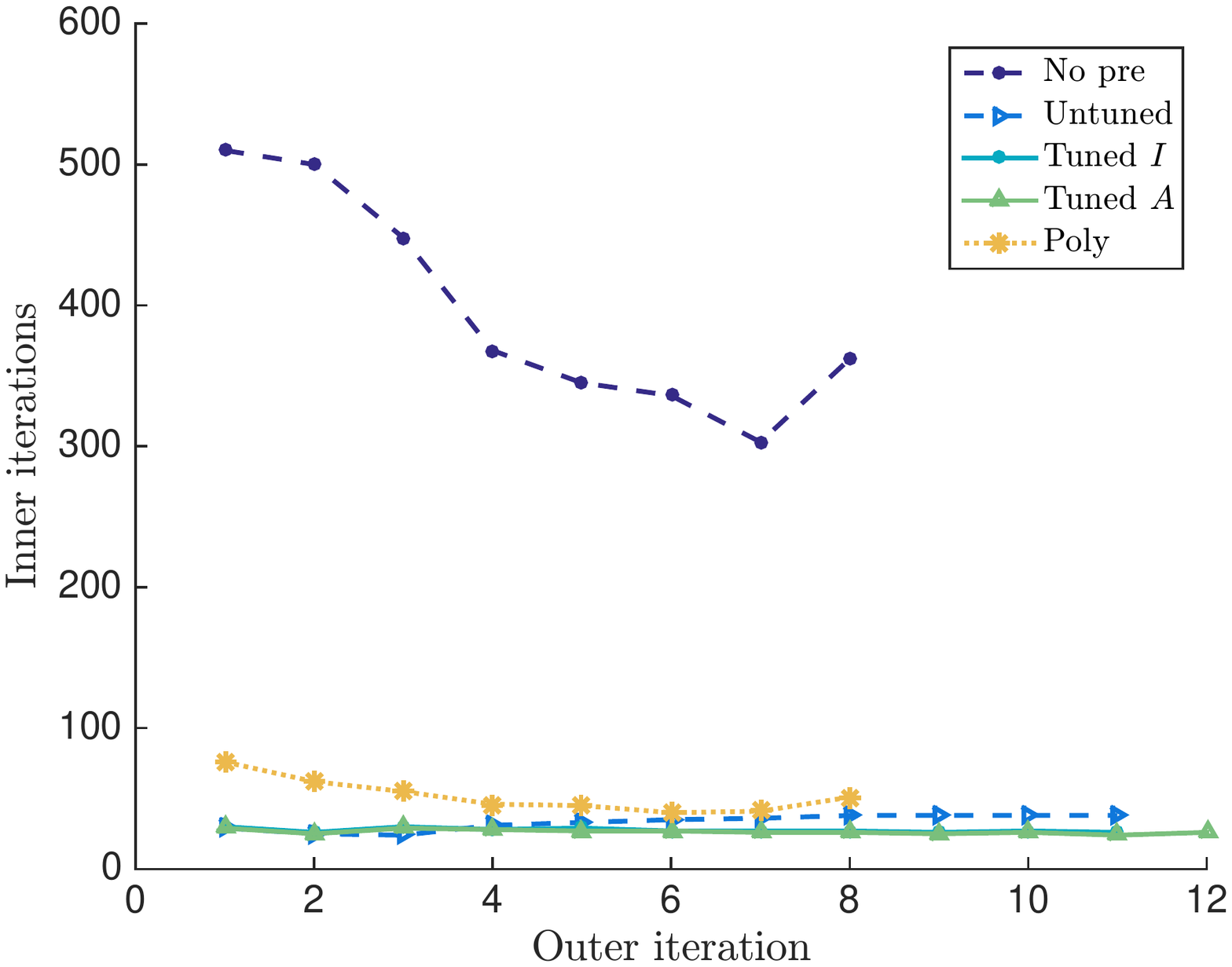}
    \end{subfigure}
    \begin{subfigure}[b]{0.45\textwidth}
            \includegraphics[width=\textwidth,trim = 1cm 6.5cm 1cm 6.5cm,clip=true]{./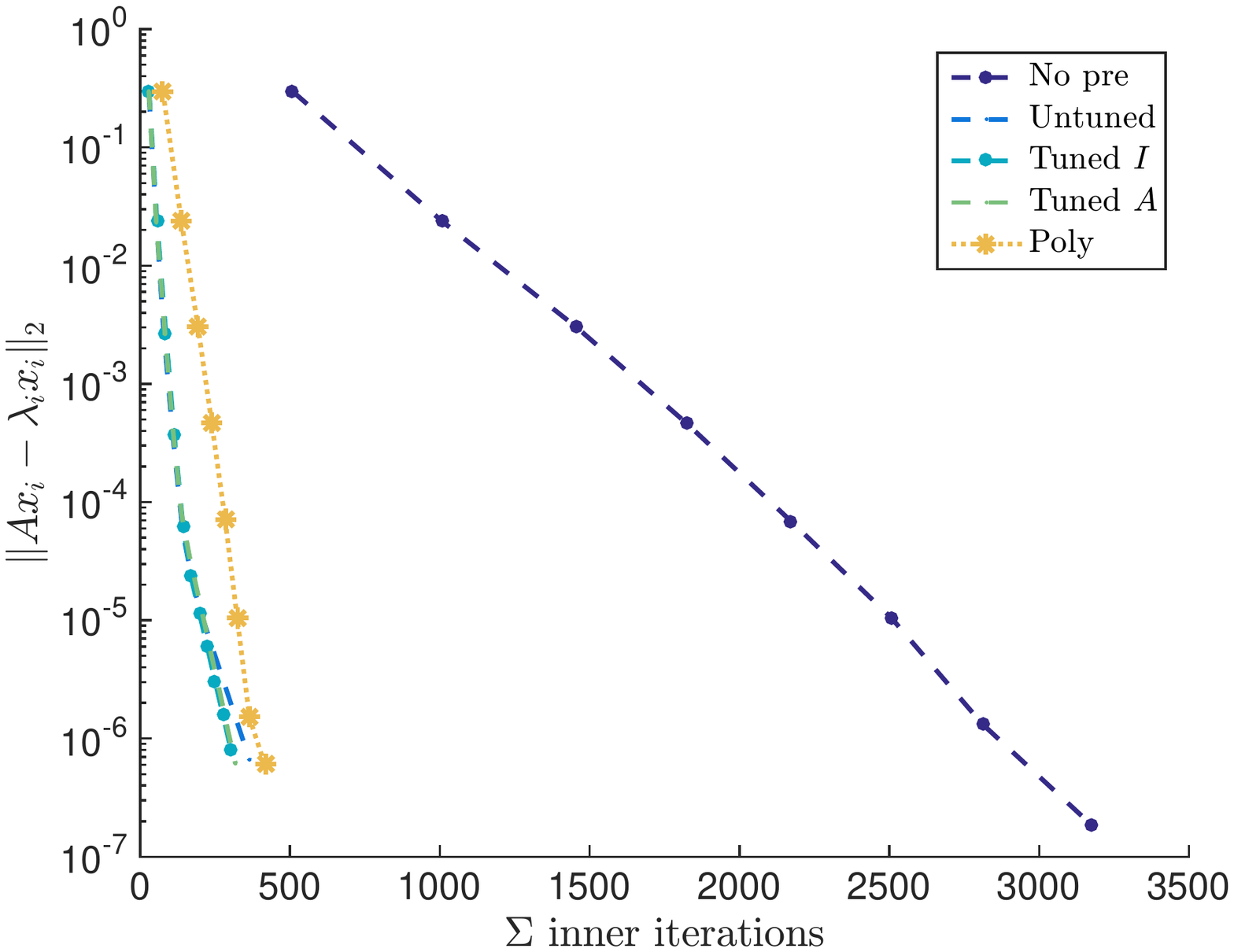}
    \end{subfigure}
    \caption{\texttt{olm2000}: $|w_1|$, $\|w_2\|$, number of GMRES iterations and eigenvalue residual. The ILU drop tolerance is $10^{-2}$ and deg$(p)=10$.}
     \label{f:olm:1e-2}
\end{figure}

\begin{table}[p]
\footnotesize
\centering
\caption{Results for all examples: final weight components $|w_1|$, $\|\wt\|_2$, and total numbers of outer and inner (GMRES)
iterations.}
\label{t:all}
\begin{tabular}{*{7}{l}}
ex.& prec. & $\theta, d$& final $|w_1|$ &final  $\|\wt\|_2$ & outer & $\sum$ inner\\
\hline 
\multirow{13}{*}{\rotatebox{90}{\mbox{\cdfd{}}}}& \multicolumn{2}{c}{no prec.} &      1 & 8.2e-08 & 15 & 305\\
&std. $P$& 1e-1& 1.6735e+08 & 1.8e+08 & 13 & 225\\
&tuned $I$& 1e-1& 58.199 & 1.1e+02 & 15 & 100\\
&tuned $A$& 1e-1& 1.7041 & 2.3 & 15 & 87\\
&poly& $d=5$&      1 & 1.6e-07 & 14 & 61\\
\cline{2-7}&std. $P$& 1e-2& 44.229 &  87 & 15 & 148\\
&tuned $I$& 1e-2&      1 & 3.6e-05 & 14 & 53\\
&tuned $A$& 1e-2&      1 & 1e-06 & 14 & 39\\
&poly& $d=10$&      1 & 1.4e-07 & 14 & 40\\
\cline{2-7}&std. $P$& 1e-3& 1.2081 & 4.4 & 15 & 89\\
&tuned $I$& 1e-3&      1 & 2.1e-06 & 14 & 45\\
&tuned $A$& 1e-3&      1 & 2.4e-07 & 14 & 31\\
&poly& $d=15$&      1 & 4.7e-08 & 20 & 79\\
\hline\multirow{13}{*}{\rotatebox{90}{\mbox{\cdde{}}}}& \multicolumn{2}{c}{no prec.} & 0.99995 & 7e-05 & 8 & 326\\
&std. $P$& 1e-1& 6.0179e+05 & 1.2e+06 & 8 & 190\\
&tuned $I$& 1e-1& 0.99997 & 1.2 & 8 & 123\\
&tuned $A$& 1e-1& 1280.8 & 1.4e+03 & 8 & 115\\
&poly& $d=5$& 0.99995 & 6.9e-05 & 8 & 67\\
\cline{2-7}&std. $P$& 1e-2& 2.5239 &   4 & 8 & 106\\
&tuned $I$& 1e-2& 0.99999 & 1.6e-05 & 8 & 69\\
&tuned $A$& 1e-2&      1 & 0.00026 & 8 & 61\\
&poly& $d=10$& 0.99995 & 6.9e-05 & 8 & 44\\
\cline{2-7}&std. $P$& 1e-3& 3.4135 &  10 & 8 & 53\\
&tuned $I$& 1e-3&      1 & 3.9e-05 & 8 & 40\\
&tuned $A$& 1e-3& 0.99994 & 0.00026 & 8 & 32\\
&poly& $d=15$& 0.99998 & 3.7e-05 & 10 & 51\\
\hline\multirow{13}{*}{\rotatebox{90}{\mbox{\olm{}}}}& \multicolumn{2}{c}{no prec.} &      1 & 1e-06 & 8 & 3172\\
&std. $P$& 1e-1& 3.4649 & 2.8 & 14 & 505\\
&tuned $I$& 1e-1&      1 & 9.2e-06 & 14 & 393\\
&tuned $A$& 1e-1&      1 & 1.2e-05 & 14 & 379\\
&poly& $d=5$&      1 & 9.7e-07 & 8 & 1934\\
\cline{2-7}&std. $P$& 1e-2& 0.97526 & 0.43 & 11 & 365\\
&tuned $I$& 1e-2&      1 & 4.6e-06 & 11 & 303\\
&tuned $A$& 1e-2&      1 & 2e-06 & 12 & 318\\
&poly& $d=10$&      1 & 1.2e-06 & 8 & 416\\
\cline{2-7}&std. $P$& 1e-3& 0.97524 & 0.12 & 10 & 313\\
&tuned $I$& 1e-3&      1 & 4.3e-06 & 10 & 260\\
&tuned $A$& 1e-3&      1 & 2.9e-06 & 10 & 255\\
&poly& $d=15$&      1 & 1.7e-05 & 20 & 1660\\
\hline
\end{tabular}
\end{table}

For all examples and all used preconditioners, Table~\ref{t:all} gives a condensed overview of the obtained results, listing
the final magnitudes of the weight components in $w$, and the total numbers of outer and inner (GMRES) iterations.
While the majority of the results are in line with the previous observations, there are some outliers.
For the matrices \cdfd~and \cdde, the tuned preconditioners built from the ILU with drop tolerance $\theta=10^{-1}$ lead to much smaller magnitudes of the
weights
compared to standard preconditioner, but the components of $w$ do not have the property $|w_1|>\|w^{(2)}\|_2$. 
Decreasing the drop tolerance to $\theta=10^{-2},~10^{-3}$ leads to the desired situation $w\rightarrow e_1$. 
Moreover, increasing the degree of the polynomial preconditioner is not always beneficial as we see in all three examples for $d=15$.
With this setting the polynomial preconditioner leads to a worse performance compared to lower polynomial degrees.
As we mentioned above, the origin of this behavior can is the basic coefficient generation approaches, which appear to return more and more very tiny
coefficients if the degree is increased. Given the difficulties with the polynomial preconditioner in these cases, we recommend to either 
look for better coefficient selection strategies or use a tuned
preconditioner.

We now transform \cdfd{} into a generalized problem by adding an artificial, tridiagonal matrix $M$ having the values 2.5, 5, 2.5 on the lower, main, and upper
diagonal, respectively. The shift is $\sigma=30$ but the remaining settings to execute this test are  unchanged. The result are illustrated in
Figure~\ref{f:Gcdfd}. As predicted, the two upper plots show that, in contrast to the situation $M=I$, the weight vector 
in the unpreconditioned and polynomial preconditioned case does not converge to $e_1$. This weight behavior is only achieved by employing tuned preconditioners.
The bottom left plot also shows a increasing number of inner iterations when no or a polynomial preconditioner is used.

\begin{figure}
    \centering
    \begin{subfigure}[b]{0.45\textwidth}
        \includegraphics[width=\textwidth,trim = 1cm 6.5cm 1cm 6.5cm,clip=true]{./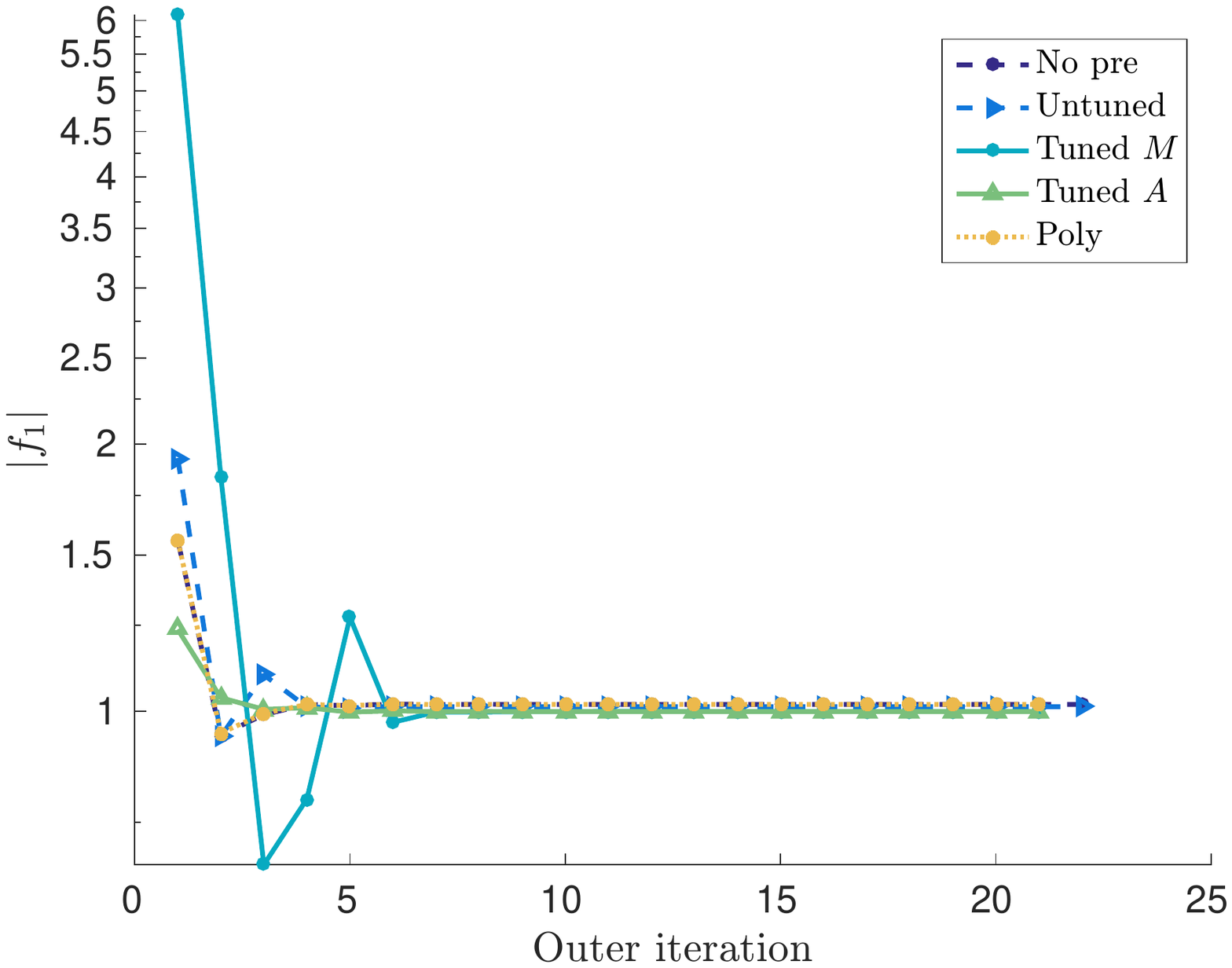}
    \end{subfigure}
    \begin{subfigure}[b]{0.45\textwidth}
        \includegraphics[width=\textwidth,trim = 1cm 6.5cm 1cm 6.5cm,clip=true]{./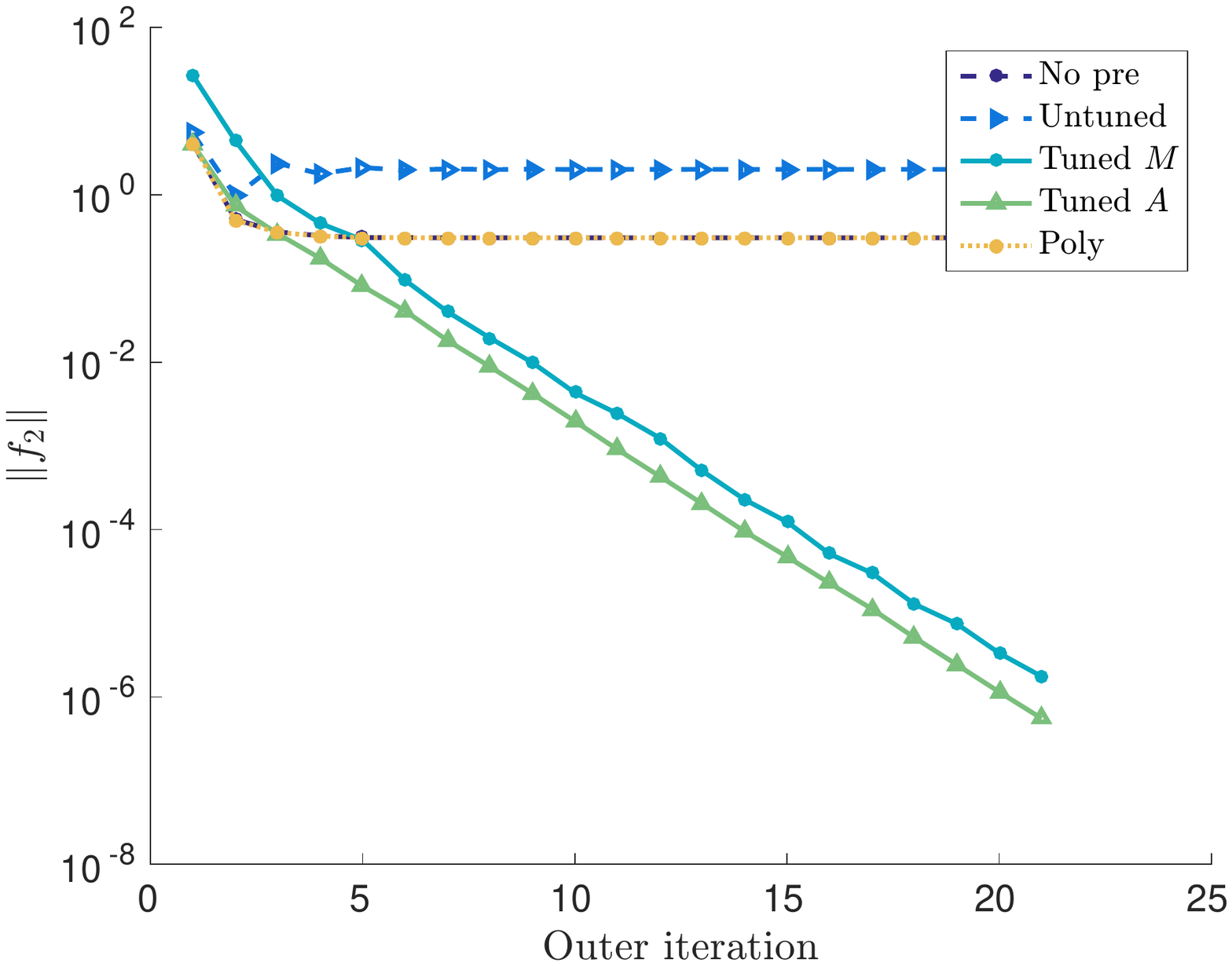}
    \end{subfigure}
    \begin{subfigure}[b]{0.45\textwidth}
        \includegraphics[width=\textwidth,trim = 1cm 6.5cm 1cm 6.5cm,clip=true]{./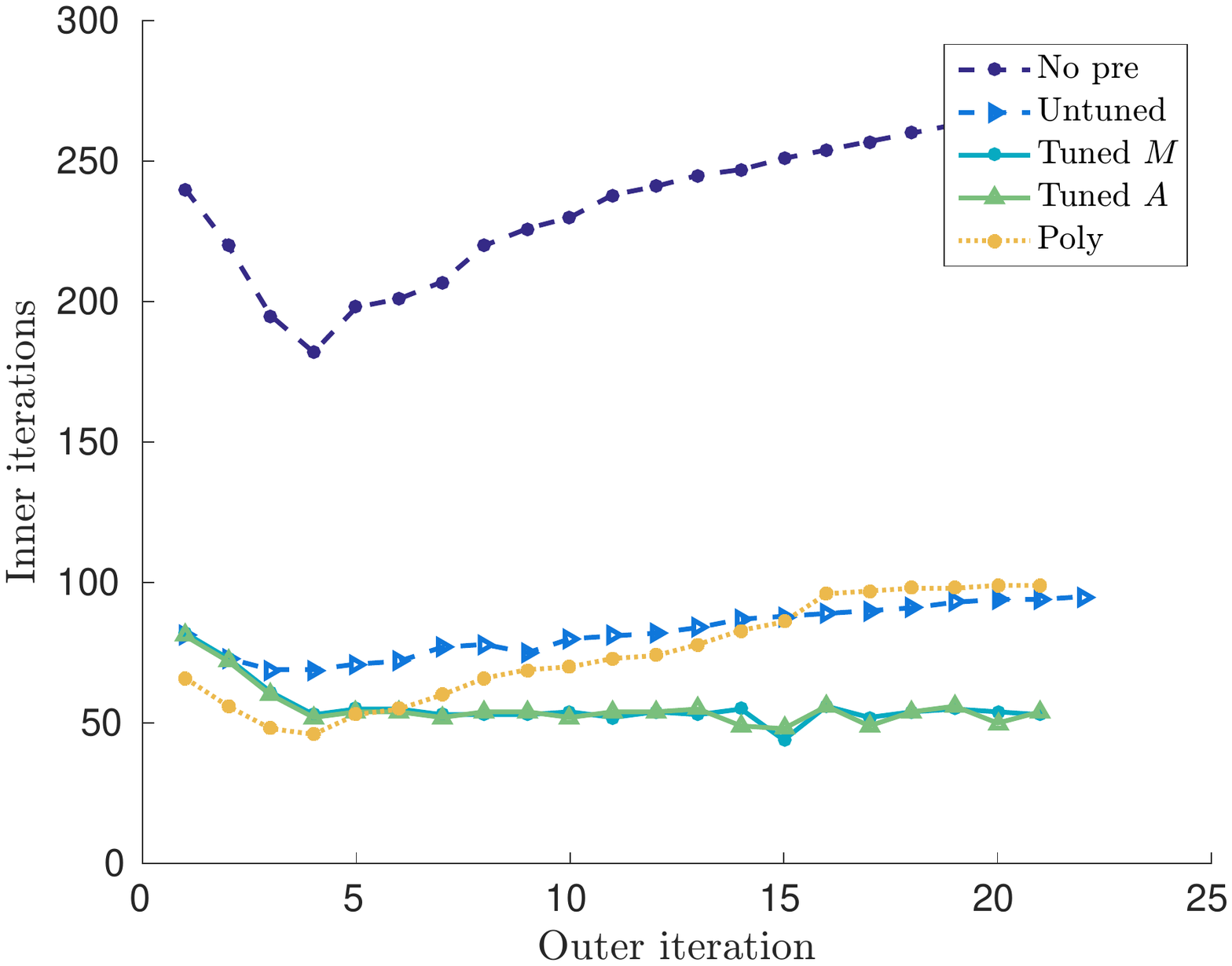}
    \end{subfigure}
        \begin{subfigure}[b]{0.45\textwidth}
        \includegraphics[width=\textwidth,trim = 1cm 6.5cm 1cm 6.5cm,clip=true]{./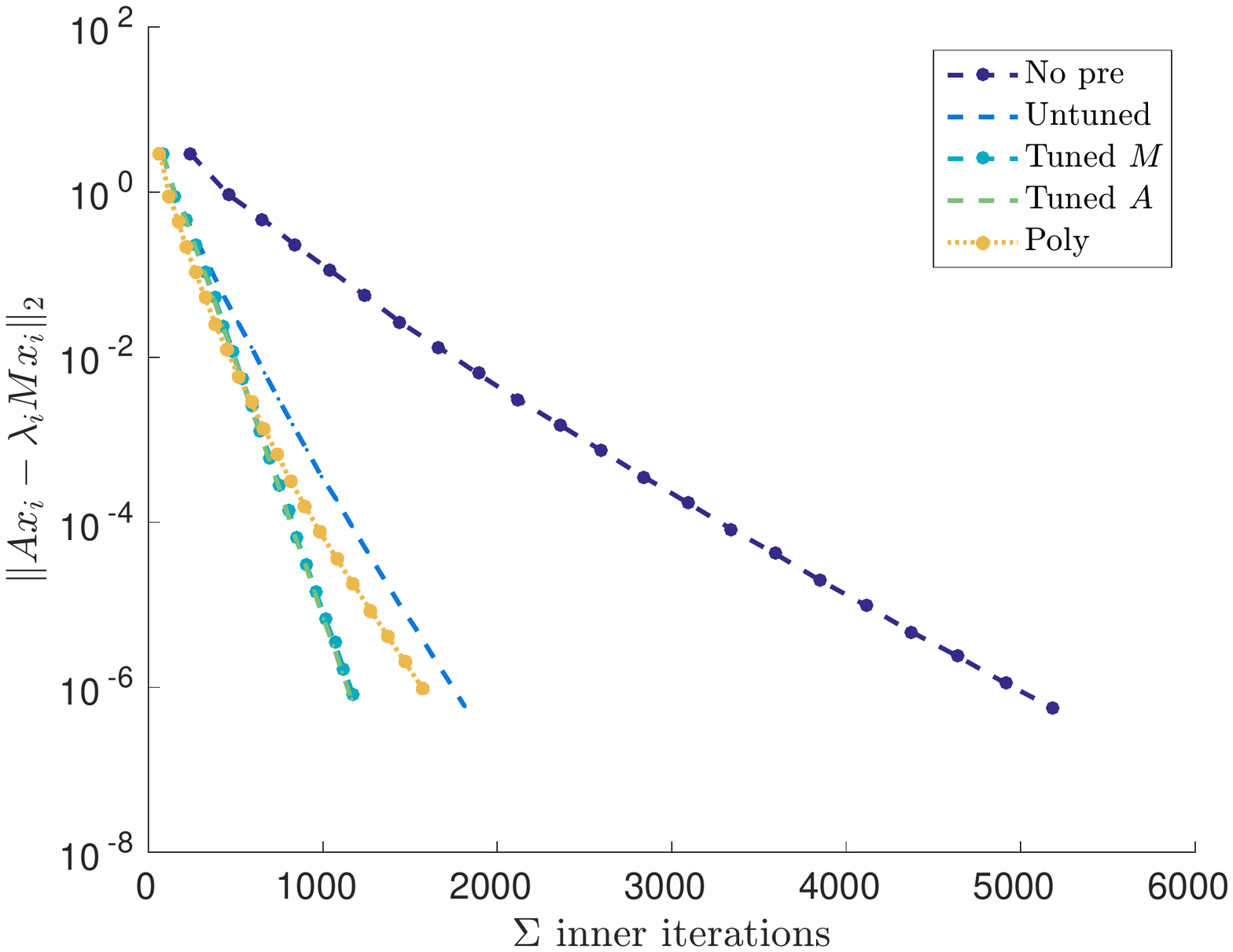}
    \end{subfigure}
    \caption{Results for the generalized problem $A=$\texttt{cd\_fd}, $M=\mathrm{tridiag}(2.5,5,2.5)$: weight components $|f_1|$, $\|f_2\|$, number of GMRES
iterations and eigenvalue residual. The ILU drop tolerance is $10^{-1}$ and deg$(p)=5$.}
     \label{f:Gcdfd}
\end{figure}

Even though this paper concentrates on GMRES bounds for inverse iteration, we finally show results of block-GMRES convergence for inverse subspace iteration. We
consider the matrix \cdde{} with the settings from above~and we seek an invariant subspace of dimension $u=6$. The drop tolerance of the incomplete LU
factorization used for the preconditioner is $10^{-2}$ and we apply a tuned preconditioner which satisfies $P_iY_i = AY_i$, e.g.\ $P_i = P+(A-P)Y_i(Y_i^T
Y_i)^{-1}Y_i^T$. For the implementation of block GMRES we used an algorithm discussed in \cite{Sood15,Sood16}.

Figure~\ref{f:block} shows the history of the norms of $W_1$ and $W_2$, the splittings of the matrix $W\in\mathbb{C}^{n\times u}$ discussed in
Section~\ref{sec:bgmres}, as the outer iteration proceeds. The bottom plots show the required number of block GMRES steps against the outer iteration (left
plot)
and the residual norm against the cumulative sum of inner block GMRES steps (right plot). As expected the reduction of inner iteration numbers by the tuned
preconditioner is apparent. The reduction of $\|W_2\|$ can be seen for the unpreconditioned as well as tuned case, the magnitude of $\|W_1\|$ is close to
2.449, but the overall behavior of the weights is similar to the single vector inverse iteration.   
\begin{figure}[t]
    \centering
    \begin{subfigure}[b]{0.45\textwidth}
        \includegraphics[width=\textwidth,trim = 1cm 6.5cm 1cm 6.5cm,clip=true]{./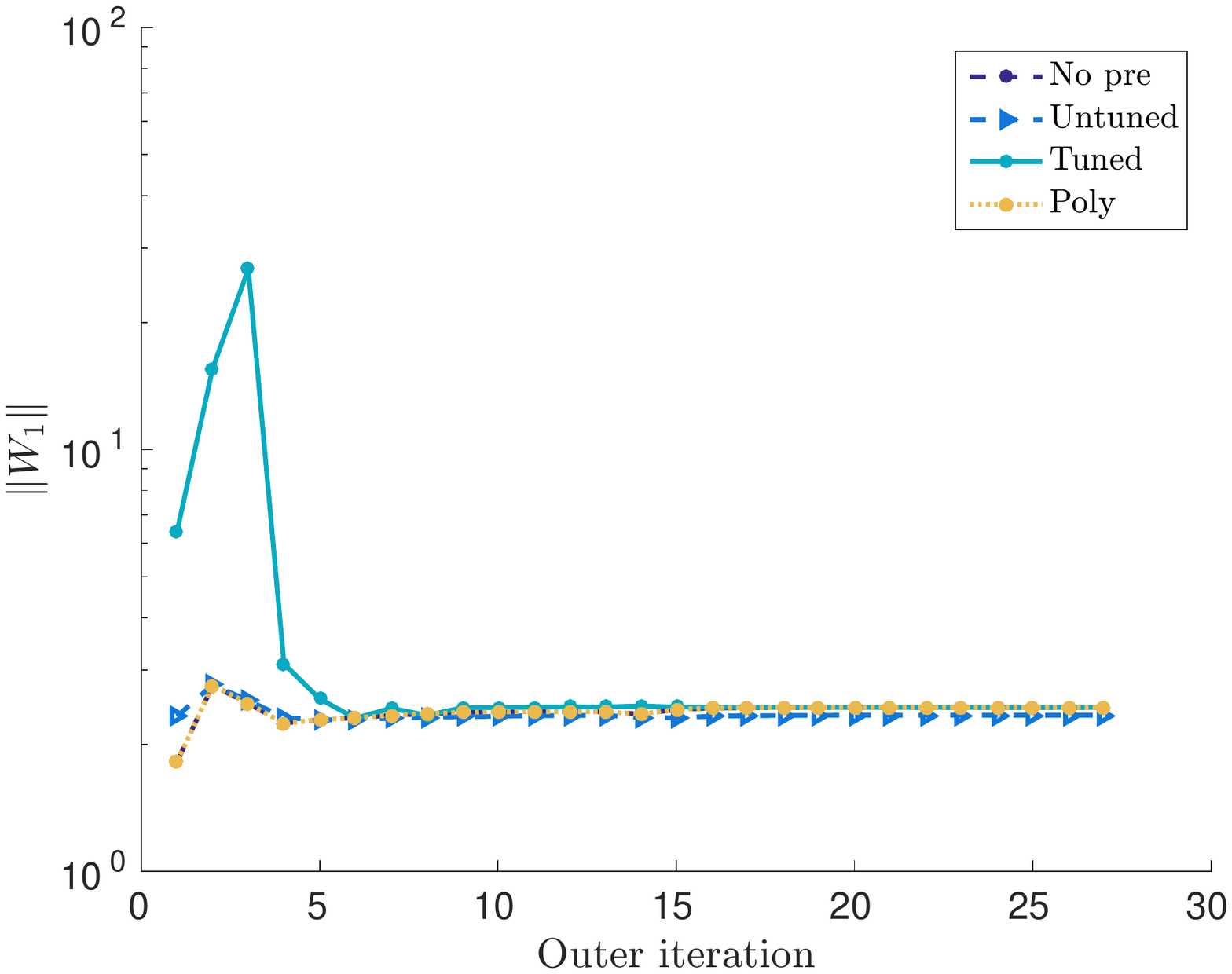}
    \end{subfigure}
    \begin{subfigure}[b]{0.45\textwidth}
       \includegraphics[width=\textwidth,trim = 1cm 6.5cm 1cm 6.5cm,clip=true]{./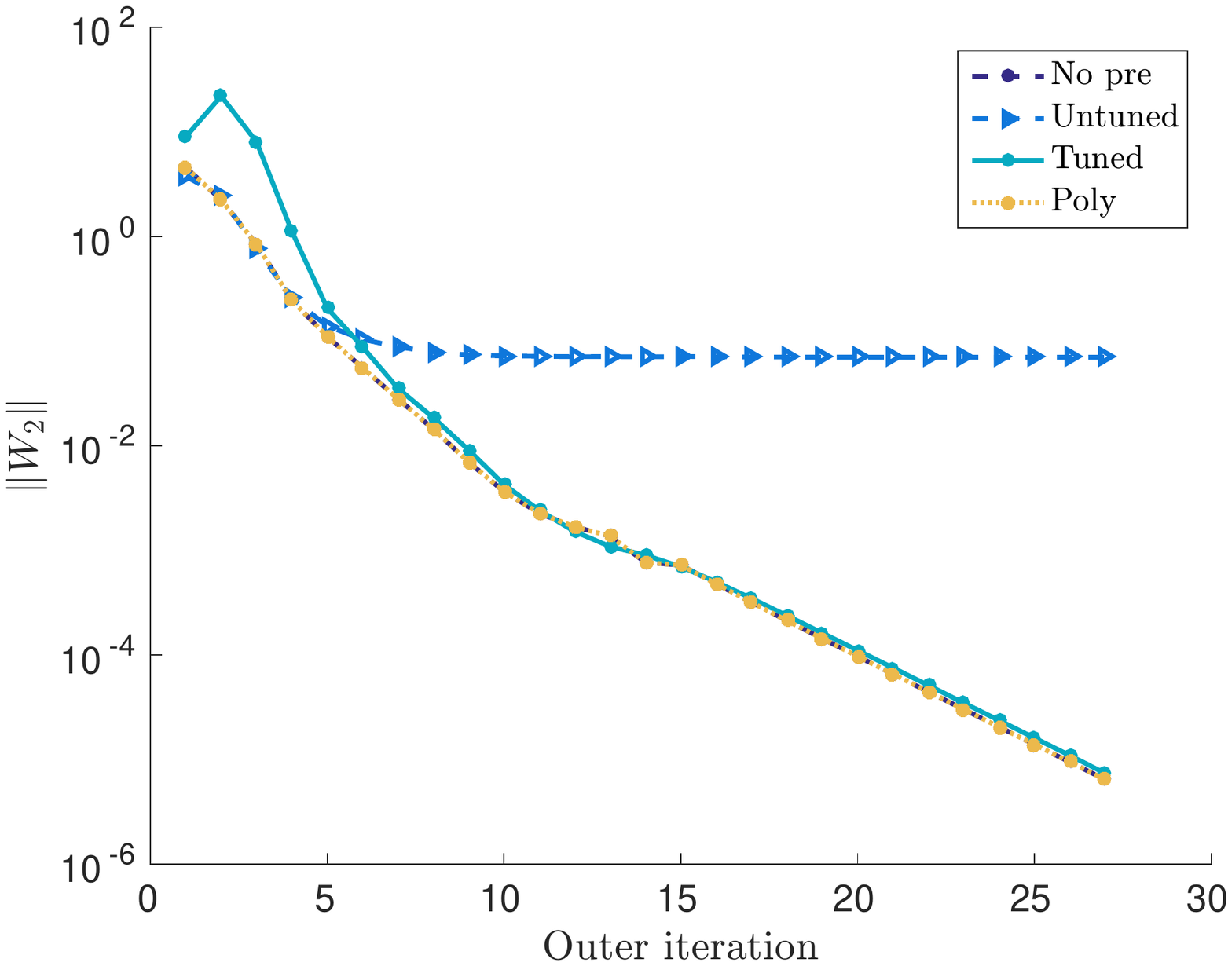}
    \end{subfigure}
    \begin{subfigure}[b]{0.45\textwidth}
        \includegraphics[width=\textwidth,trim = 1cm 6.5cm 1cm 6.5cm,clip=true]{./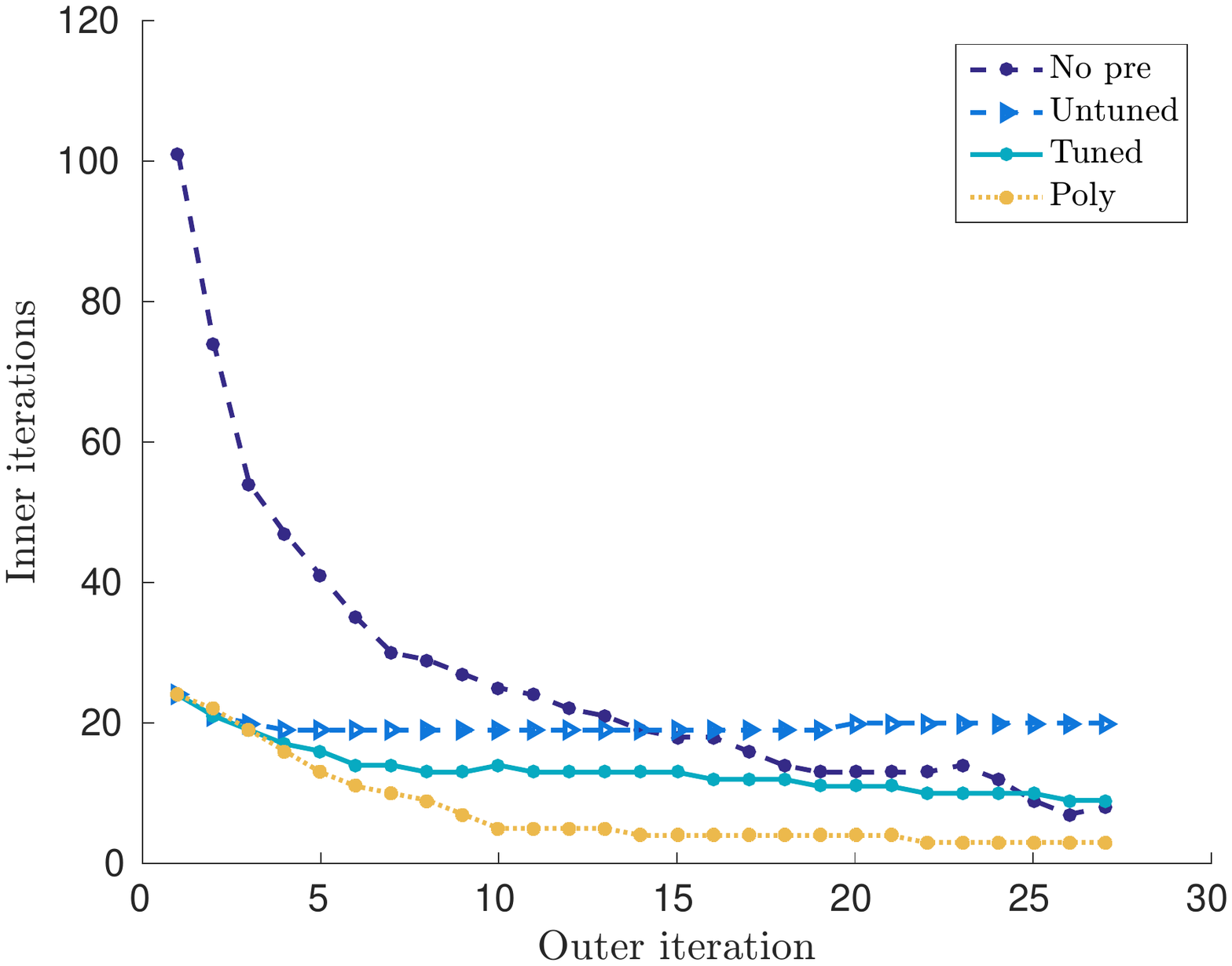}
    \end{subfigure}
        \begin{subfigure}[b]{0.45\textwidth}
        \includegraphics[width=\textwidth,trim = 1cm 6.5cm 1cm 6.5cm,clip=true]{./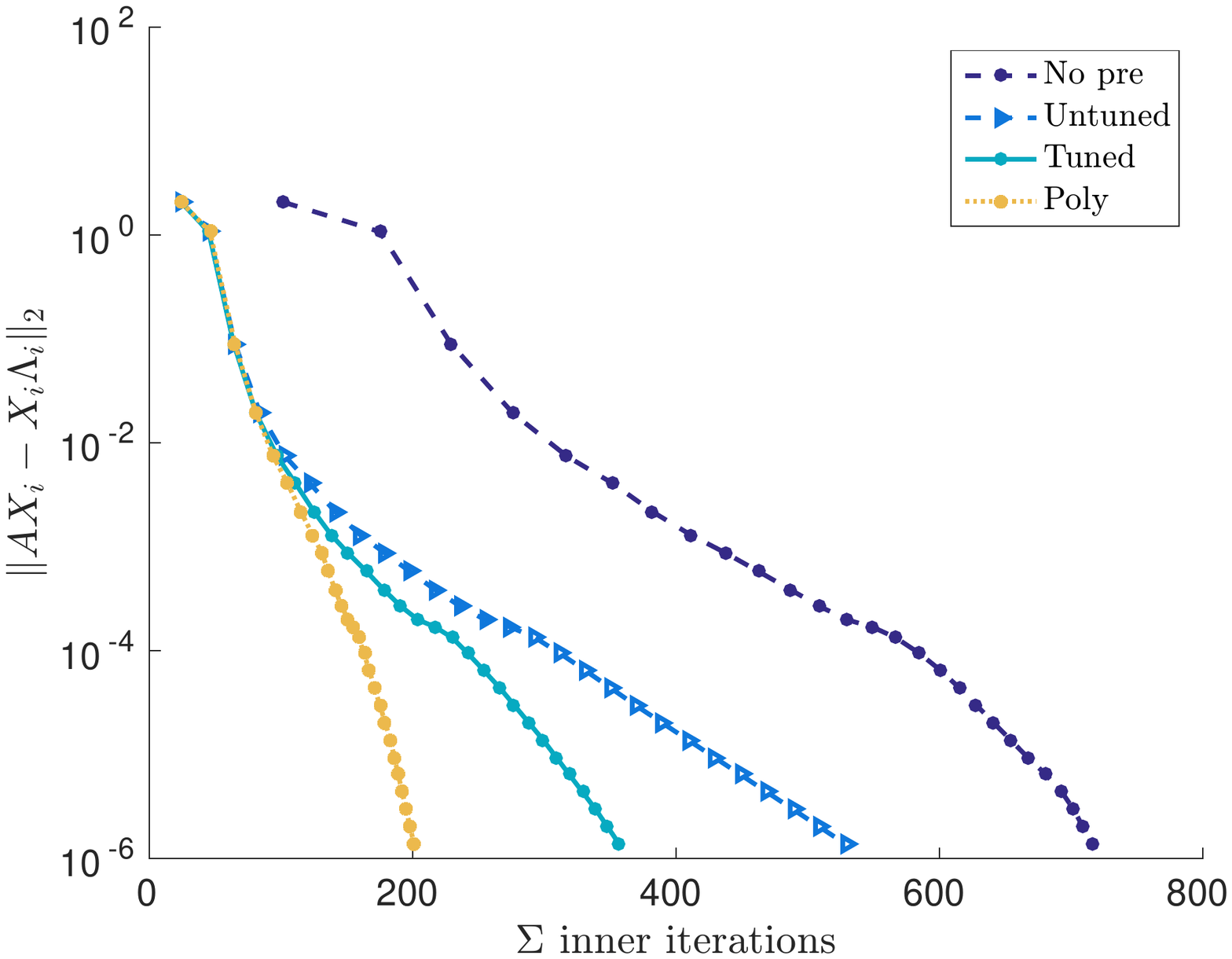}
    \end{subfigure}
    \caption{Inverse subspace iteration for \texttt{cdde1}, $u=6$: $\|W_1\|$, $\|W_2\|$, number of block-GMRES iterations and eigenvalue residual. The ILU drop 
tolerance is $10^{-2}$.}
     \label{f:block}
\end{figure}

\section{Conclusion}

In this paper we discussed the convergence behavior of GMRES (as a prominent iterative method) for solving linear systems that arise during the solution of
eigenvalue problems via inverse iteration. We gave detailed bounds on GMRES that take the special behavior of the right hand side into account and explained the
initial sharp decrease of the GMRES residual. The bounds gave rise to adapted preconditioners for GMRES when applied to eigenvalue problems, e.g. tuned and
polynomial preconditioners. The analysis was extended to inverse iteration for the generalized eigenvalue problem and subspace iteration, where block GMRES
bounds were given. The numerical results showed that the new GMRES (block GMRES) bounds are much sharper than  conventional bounds and that preconditioned
subspace iteration with either a tuned or polynomial preconditioner should be used, where the tuned preconditioner is generally easier
to construct. Possible future research perspectives should therefore, e.g., include generation strategies of high quality polynomial coefficients adapted to the
outer eigenvalue iteration.

\paragraph*{Acknowledgement.} 
 The authors thank Kirk Soodhalter (Radon Institute for
Computational and Applied Mathematics (RICAM)) for kindly providing his Matlab implementation of block-GMRES.
\newcommand{\noopsort}[1]{} \newcommand{\printfirst}[2]{\#1}
  \newcommand{\singleletter}[1]{\#1} \newcommand{\switchargs}[2]{\#2\#1}

\end{document}